    \theoremstyle{plain}
    \newtheorem{theorem}{Theorem}
    \newtheorem{corollary}[theorem]{Corollary}
    \newtheorem{lemma}[theorem]{Lemma}
    \newtheorem{proposition}[theorem]{Proposition}
    \theoremstyle{definition}
    \newtheorem{assumption}{Assumption}
    \newtheorem{remark}[theorem]{Remark}
    \newtheorem*{remark*}{Remark}
    \newenvironment{assump}[2][]
    {\begin{assumption}[#1]}
        {\end{assumption}}
    \newcommand{\rd}{\mathbb R^d}
    \newcommand{\R}{\mathbb R}
    \newcommand{\pr}{\mathbf P}
    \newcommand{\e}{\mathbf E}
    \newcommand{\ind}[1]{{\rm \bf1}\{#1\}}
\begin{document}
    \title[ Markov chains in cones]
    {Markov chains in the domain of attraction of 
    Brownian motion in cones} 
    \thanks{
        D. Denisov was supported by a Leverhulme Trust Research Project Grant  RPG-2021-105. 
    }
    \author[Denisov]{Denis Denisov}
    \address{Department of Mathematics, University of Manchester, UK}
    \email{denis.denisov@manchester.ac.uk}
    
    \author[Zhang]{Kaiyuan Zhang}
    \address{Department of Mathematics, University of Manchester, UK}
    \email{kaiyuan.zhang@manchester.ac.uk}
    
    \begin{abstract}
We consider a multidimensional Markov Chain $X$ converging 
to a multidimensional Brownian Motion.  
We  construct a positive  harmonic function for $X$ killed 
on exiting the cone.  
We show that its asymptotic behavior 
is similar to that of to the harmonic function of Brownian motion. 
We use the harmonic function to 
study the asymptotic behaviour of the  tail distribution 
of the exit time $\tau$  of $X$ from a cone.  

\end{abstract}
    
    % \version\vspace{1cm}
    
    \keywords{Random walk, Markov chain exit time, harmonic function, conditioned process}
    \subjclass{Primary 60G50; Secondary 60G40, 60F17}
    \maketitle

    %%%%%%%%%%%%%%%%%%%%%%%%%%%%%%%%%%%%%%%%%%%%%%%%%%%%%%%%%%%%%%%%%%%%%%%%%%%%%%%%%%%%%%%%%%%%%%%%%%%%%%%%%%
    
    \section{Introduction, assumptions and main results}
    \subsection{Introduction}
    
    Consider a Markov chain $X=(X(n))_{n\ge 0}$ 
    on $\rd$, $d\geq1$, where
     $X(n)= (X_1(n),X_2(n),\cdots, X_d(n))$ 
     is a random vector. 
    Denote by $\mathbb{S}^{d-1}$ the
    unit sphere in $\rd$ centred at the origin and by $\Sigma$ an open and connected subset of
    $\mathbb{S}^{d-1}$. Let $K$ be the cone generated by the rays
    emanating from the origin and passing through $\Sigma$, i.e.
    $\Sigma=K\cap \mathbb{S}^{d-1}$.
    
    Let $\tau$ be the exit time from $K$ 
    of the Markov chain $X$, that is 
    $$
    \tau:=\inf\{n\ge 1\colon X(n)\notin K\}.
    $$

In  a sequence of papers \cite{DW10, DW15, DW19, randomwalkinconesrevisited} 
Denisov and Wachtel  studied 
the asymptotic behavior of  random walks in 
cones.   
In these papers they presented several 
approaches to construction of 
the positive  harmonic function 
for random walks  killed on leaving  the cone $K$.  
Then the harmonic function was  used to 
find asymptotics for 
    \[
    \pr_x(\tau>n),\quad n\to \infty,
    \]
to construct random walks conditioned to stay in the cone, 
to prove conditional global and local limit theorems 
and establish convergence of  random walk conditioned to 
stay in the cone  to Brownian motion conditioned to stay in the cone.  
Similar arguments work in a number of  different situations. 
Some Markovian examples can be found in e.g.~\cite{DKW, DW15b, GLL16,GLP16}.    
The methodology works well for time-dependent boundary as well, see~\cite{DSW16} 
and follow-up articles.  

The key role in the above construction 
was played by the KMT coupling (also called strong coupling, Hungarian construction, etc.) 
of random walks with Brownian motion. 
While it is  possible to construct a harmonic function 
without this coupling, the analysis of the tail asymptotics 
and subsequent limit theorems made an essential use of 
this strong approximation.  
The use of the KMT coupling 
presents serious difficulties in the analysis of other Markovian situations, 
as the KMT coupling heavily relies 
on independence of increments of random walks.  
On the other hand a  Functional Central Limit Theorem 
is available in a number of situations, as it holds for example, for 
martingales. 
In general, it is also easier to prove a FCLT then 
to find a strong coupling with  a specific rate of convergence.

    In the present paper we  develop further  the methodology 
    of~\cite{randomwalkinconesrevisited}   
    to consider the Markov chains in the domain of attraction of  
    Brownian motion.   
    In this case construction of harmonic function remains essentially 
    the same as in~\cite{randomwalkinconesrevisited}.  
    The main improvement of the present paper 
    is in finding the  asymptotics 
    for the tail of $\tau$ and  global conditional limit theorem.  
    Here we do not use a strong coupling and instead 
    rely  on FCLT and large deviations estimates for martingales.  
    In fact it is not even clear if a version of a strong coupling with 
    sufficiently good rate of convergence exists under our conditions.  
    In this paper we have not considered an interesting question how to prove 
    FCLT for conditioned processes without strong coupling, 
    but plan to study this question in future. 
    
    There are a number of cases when   the methodology 
    based on FCLT instead of the KMT coupling  
    might be of interest.  
    For example, a natural   question 
    is how to study  
    the long-time  behaviour of 
     random walks which belong to  the domain of attraction of stable 
     processes in cones. 
     In this case KMT coupling is not available because of big jumps 
     while FCLT is well-known. 
      Currently the first author investigates 
      this situation in the joint project~\cite{CDWP23}.  
    Another interesting question is to describe 
    the behaviour of Markov chains converging to more general diffusion 
    processes rather than standard Brownian motion. 
    For Lamperti Marjov chains in one dimensions this question has been considered 
    in~\cite{DKW}. 
    In future work we are also planning to consider 
     Markov chains in the domain of attraction of 
      more general multidimensional diffusion processes.  
    
    It is worth mentioning that 
     asymptotic behaviour of killed random walks and Markov chains was   
      considered by Varoupolous in a series of works~\cite{Var99,Var00,Var01}. 
      In these works estimates for the 
      heat kernel $\pr_x(X(n)\in dy,\tau>n)$   
        have been obtained and it was shown that  
        the heat kernel of 
        random walks is well approximated  
        with that of Brownian motion.  
        Recently for Markov chains with uniformly bounded increments satisfying uniform ellipticity condition 
    a harmonic function has been constructed in~\cite{MustaphaSifi19} in general Lipschitz domains.  
    In comparison with these 
    work we do not  
    assume  boundedness of increments of Markov chains.  
    The unboundedness of increments presents serious   difficulties 
    in finding asymptotics for $\pr_x(\tau>n)$, 
    as one should carefully analyse the  large deviations.

\subsection{Assumptions and main results}
    
For Markov chains and random walks converging to Brownian motion an important role in our approach 
is played by the positive harmonic function $u$ of the Brownian motion killed at the
    boundary of $K$.  
    This function  can be described as the unique  (up to a constant factor), 
    strictly positive  on $K$ solution of the
    following boundary problem:
    \begin{equation}
    \label{eqn:harmonic_con}
    \begin{cases}
        \Delta u(x)=0, &  x \in K \\
        u(x)=0,& x \in \partial K, 
    \end{cases}      
\end{equation}
where $\Delta$ is the Laplace operator. 
The resulting harmonic  function is homogeneous of a certain order $p>0$, that is 
    $u(x) = |x|^p u(x/|x|)$ for  $x\in K$. 
    When $d=1$  there are  only two non-trivial cones: 
    $K=(0,\infty)$ and $K=(-\infty,0)$.
    For  these cones the positive (on $K$) 
    harmonic function is given by  $u(x)=x$ and  $u(x)=-x$ respectively. 
    It is clear that these functions are homogeneous of  order $p=1$. 

    For $d\ge 2$ 
    the function $u(x)$ and the constant $p$ 
    can be found as a solution to the following eigenvalue problem. 
    Let $L_{\mathbb{S}^{d-1}}$ be the Laplace-Beltrami operator on
    $\mathbb{S}^{d-1}$ and assume that $\Sigma$ is regular with respect to $L_{\mathbb{S}^{d-1}}$.
    Then  there exists a complete set of orthonormal eigenfunctions
    $m_j$  and corresponding eigenvalues $0<\lambda_1<\lambda_2\le\lambda_3\le\ldots$ satisfying
    \begin{align}
     \label{eq.eigen}
      L_{\mathbb{S}^{d-1}}m_j(\sigma)&=-\lambda_jm_j(\sigma),\quad \sigma\in \Sigma\\
      \nonumber m_j(\sigma)&=0, \quad \sigma\in \partial \Sigma. 
    \end{align}
    Then
    \[
    p=\sqrt{\lambda_1+(d/2-1)^2}-(d/2-1)>0
\]
    and the positive harmonic function $u(x)$ 
    solving~\eqref{eqn:harmonic_con}  
    is given by
    \begin{equation}
    \label{u.from.m}
    u(x)=|x|^pm_1\left(\frac{x}{|x|}\right),\quad x\in K.
    \end{equation}
    Note that~\eqref{u.from.m} implies that 
    \begin{equation}\label{eqn:u.main.bound}%\label{eq:u.bound.main}
        u(x)\le C|x|^p, \quad x\in K, 
    \end{equation}    
    for some constant $C$. 
    We refer to~\cite{BS97,DeB87} for some further details 
    about the harmonic function $u$ and  
    properties of the exit times of Brownian motion
    from a general cone.

%In this paper, we showed that $X(k)$ converges to the d-dimensional Brownian Motion $B(k)$ uniformly. Then harmonic function of Brownian Motion could be substitute to $\mathcal{L}$ in the condition \eqref{eqn:harmonic_con}. Moreover, this convergence gives that,
%\begin{equation}
%    \sup_x \pr_x \left( \sup_{u \le n} |X[u]- B[u]| \ge b_n\right) = o\left(\left(\frac{b_n}{\sqrt{n}}\right)^p\right)
%\end{equation}

Before stating our results we formulate the assumptions that will be 
used in the papers. 
We will denote the distance between $x$ and the boundary  of cone 
$\partial K$ by $d(x)$. As usual,  $|x|$ is the distance from $x$ to the origin. 
We will impose the same geometric assumption 
on the cone as in~\cite{randomwalkinconesrevisited}. 
\begin{assump}{(G)}
Cone $K$ is Lipschitz and star-like.  
There exists a finite constant $C>1$ such that,
\begin{align}
    u(x) & \le C|x|^{p-1} d(x),\quad  x\in K
    \label{eqn:u.harmonic.boundary}\\ 
    u(x)&  \ge C^{-1}|x|^{p-1} d(x),\quad  x\in K. 
    \label{eqn:u.harmonic.boundary.low}
\end{align}
\label{assumption:k}
\end{assump}
\begin{remark} 
We call this assumption geometric since 
a certain regularity is required from the boundary 
of the cone for this assumption to hold.     
It was explained in~\cite{randomwalkinconesrevisited} that  Assumption~\ref{assumption:k} holds when 
     $\Sigma$ is $C^{1,\alpha}$ for 
    $\alpha\in(0,1]$, see~\cite{GT2001} for the definition of $C^{1,\alpha}$. 
    In particular $C^2$ smoothness of $\Sigma$ is sufficient 
    for (G) to hold.  However, this  assumption is slightly 
    stronger than $C^1$ smoothness. 
    \end{remark}    
We will  need some sort of uniform integrability 
from the increments of $X$ to control big jumps. 
For that  we will  assume that 
the increments of Markov chain $X$ are stochastically 
majorised by a random variable satisfying certain moment conditions. 
\begin{assump}{(SM)}
There exists a positive random variable $Y$ 
stochastically majorising the increments of $X$, 
that is 
\begin{equation}
  \sup_{x\in K} \pr_x(|X(1)-x|>t) \le \pr(Y>t),
  \quad \text{ for all } t\ge 0,   
\end{equation}
where the random variable  $Y$ 
satisfies the following moment conditions,
\begin{equation}\label{eq:Y}
    \begin{cases}
       & \e [ Y^p] < \infty,\text{ for } \quad  p>2  \\
       & \e [  Y^2 \log(1+Y)] < \infty,\text{ for } \quad p =2. 
    \end{cases}      
\end{equation}
\label{assumption:x}
\end{assump}
\begin{remark} 
In Denisov and Wachtel~\cite{randomwalkinconesrevisited} 
the same assumption was made about the increments  
of the random walk. 
They also  showed by examples  
that  in the case $p=2$ if 
the condition $\e[Y^2\log(1+Y)]<\infty$ does not hold 
then asymptotic behavior of $\pr_x(\tau>n)$ as $n\to \infty$
is different from that of Brownian motion. 
\end{remark} 
\begin{remark}\label{rem:slow}
    Using~\cite{D06} it was shown in~\cite[Lemma 10]{randomwalkinconesrevisited} 
    that~\eqref{eq:Y}  
    implies that 
    there exists  
    a slowly varying, monotone decreasing differentiable function  $\gamma(t)$
    such that 
    \[
      \e[Y^p;Y>t] = o(\gamma(t)t^{p-2}),\quad t\to\infty  
    \]
    and 
    \[
      \int_1^\infty \frac{\gamma(x)}{x} dx<\infty.   
    \]
 We will use this function in the rest of the paper.    
\end{remark}    

Next we will require that the mean and covariances of increments  are sufficiently close to those of standard Brownian motion.  
\begin{assump}{(M1)}
Let $\gamma$ be the slowly varying, monotone 
differentiable  function 
from Remark~\ref{rem:slow}. We assume that      
\begin{equation*}
    |\mathbf{E}_x[X(1)-x]|= 
    o\left(\frac{\gamma (d(x))}{d(x)}\right)  
\end{equation*}
and 
\begin{equation}
    \begin{cases}
       &  |\mathbf{E}_x[(X_i(1)-x_i)^2]-1| = o(\gamma(d(x)),  \quad  \text{if } 1\le i=j\le d  \\
       & |\mathbf{E}_x[(X_i(1)-x_i) (X_j(1)-x_j)]| = o(\gamma(d(x)),\quad  \text{ if }  1\le i\neq j\le d 
    \end{cases}     
    \label{ass-incre1}
\end{equation}
as $d(x)\to \infty.$
\label{assumption:increment}
\end{assump}
\begin{remark}
    It follows from Remark~\ref{rem:slow} 
    that we can equivalently formulate this condition 
    by using  $t^{2-p}\e[Y^p;Y>t]$ instead of $\gamma(t)$. 
\end{remark}    
For the tail asymptotics we will require that the mean 
and covariances of the Markov chain coincide  
with those of Brownian motion. 
\begin{assump}{(M2)}
For $x\in K$, 
\begin{equation}
    \begin{cases}
       &  \mathbf{E}_x[(X_i(1)-x_i)^2] = 1,\quad  \text{ when  $i=j$} \\
       & \mathbf{E}_x[(X_i(1)-x_i) (X_j(1)-x_j)] = 0 ,\quad\text{ when  $i\neq j$}
    \end{cases} 
\end{equation}
 \label{ass-incre2}
\end{assump}
Note that, Assumption \ref{assumption:increment} is sufficient  
 to show the existence of harmonic function. 
However, to find the tail asymptotics of $\tau$ we make 
use of  the FCLT  
and large deviations results for martingales. 
For that reason to simplify computations 
we have imposed  
a stronger Assumption~\ref{ass-incre2}, which 
ensures that both $X$  and  $(|X(n)|^2-dn)_{n\ge 0}$ 
are martingales. 
This assumption can certainly be relaxed 
and possibly even a weaker Assumption~\ref{assumption:increment} is sufficient. 
However, the relaxation of this assumption requires   
   much more complicated computations.    
   We plan  to return to the question 
   of  
relaxing~\ref{ass-incre2}  
in future, when we will consider  Markov chains 
in the domain of attraction of  more general diffusion processes.     

We are now in position to present our first result  about the existence 
of harmonic function. 
This result is an extension of 
 in~\cite[Theorem 2]{randomwalkinconesrevisited} 
 to the case of Markov chains. 
\begin{theorem}
Let the Markov chain $X$    
satisfy Assumptions \ref{assumption:k}, \ref{assumption:x} and \ref{assumption:increment}. 
 Then the function 
\begin{equation}\label{eq:harmonic}
    V(x) := \lim_{n \to \infty} \mathbf{E}_x [u(X(n); \tau>n] 
\end{equation}
is finite, positive and harmonic 
for $X$ killed at leaving the cone, that is 
\begin{equation}
    V(x) =\mathbf{E}_x [V(X(1)); \tau>1],\quad    
    x\in K. 
\end{equation}
Furthermore, if $p\ge1$ then 
    $$
    V(x)\sim u(x), \quad\text{for }x\in K, \text{ as } d(x)\to\infty,
    $$
    and 
    \[
        \sup_{x\in K: |x|=o(n^{p/(2(p-1))})}
        \left| 
        \frac{\e_x \left[u(X(n));\tau>n\right] - V(x)}{1+u(x)}
        \right| 
        \to 0,\quad  n\to \infty. 
    \]
    If $p<1$ then 
    \[
    V(x)\sim u(x)\quad\text{for }x\in K\ \text{when } d(x)|x|^{p-1}\to\infty.  
    \] 
\label{thm:harmonic.function}
\end{theorem}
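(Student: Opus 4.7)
The argument follows the Denisov--Wachtel framework of~\cite[Theorem 2]{randomwalkinconesrevisited}, adapted to the Markovian (non-i.i.d.) setting. Set $V_n(x) := \e_x[u(X(n)); \tau > n]$; by the Markov property,
\[
V_{n+1}(x) - V_n(x) = \e_x[g(X(n)); \tau > n], \qquad g(y) := \e_y[u(X(1)); X(1) \in K] - u(y).
\]
Proving that $V_n(x)$ converges therefore reduces to establishing absolute summability of $\sum_n \e_x[|g(X(n))|; \tau>n]$. Positivity would then follow from a suitable deep-in-cone lower bound, and harmonicity of $V$ is immediate from the Markov property once the uniform bound $V_n(x)\le C(1+u(x))$ (established along the way) justifies dominated convergence.

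\textbf{Bounding $g$.} The main analytic step is to show that there is a slowly varying decreasing function $\tilde\gamma$ with $\int_1^\infty \tilde\gamma(t)/t\,dt<\infty$ such that
\[
|g(y)| \le C\,|y|^{p-2}\,\tilde\gamma(d(y)), \qquad y\in K,\ d(y)\ge 1.
\]
I would split the expectation defining $g(y)$ by the event $\{|X(1)-y|\le d(y)/2\}$. On the small-jump part, $X(1)\in K$, so a third-order Taylor expansion of $u$ applies with the bounds $|\nabla u(y)|\le C|y|^{p-1}$, $|D^2 u(y)|\le C|y|^{p-2}$, $|D^3 u(y)|\le C|y|^{p-3}$: the linear contribution is handled by the mean estimate in Assumption~\ref{assumption:increment}; the quadratic contribution reduces to $\tfrac12\Delta u(y)=0$ modulo covariance perturbations controlled by Assumption~\ref{assumption:increment} (and identically zero under Assumption~\ref{ass-incre2}); and the cubic remainder is bounded through $\e_y[|X(1)-y|^3\ind{|X(1)-y|\le d(y)/2}]$, estimated via Assumption~\ref{assumption:x} and Remark~\ref{rem:slow}. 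The big-jump contribution is controlled using $u(z)\le C|z|^p$ from~\eqref{eqn:u.main.bound}, stochastic domination by $Y$, and the moment bound~\eqref{eq:Y}.

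\textbf{Closing the argument.} With this estimate in hand, the next step is the a priori bound $V_n(x)\le C(1+u(x))$. Starting from the telescoping identity $V_{n+1}(x) = u(x) + \sum_{k=0}^n \e_x[g(X(k));\tau>k]$, I would bootstrap by first controlling $V_n(x)$ on truncations of the cone and then using a Gronwall-type comparison; the martingale identity $\e_x[|X(n)|^2]=dn+|x|^2$ provided by Assumption~\ref{ass-incre2} together with the FCLT places $|X(n)|$ and $d(X(n))$ at scale $\sqrt n$ on the survival event with high probability. Re-inserting this into the error series and using the one-sided tail $\pr_x(\tau>n)\lesssim u(x)n^{-p/2}$ (derivable from the FCLT applied to the killed process) together with $\int\tilde\gamma(t)/t\,dt<\infty$ produces the required summability. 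The asymptotics $V(x)\sim u(x)$ then follow directly from $|V(x)-u(x)|\le \sum_n \e_x[|g(X(n))|;\tau>n]$ and the lower bound~\eqref{eqn:u.harmonic.boundary.low}; the regime dichotomy in the statement ($d(x)\to\infty$ for $p\ge 1$ versus $d(x)|x|^{p-1}\to\infty$ for $p<1$) is precisely what is needed to ensure $|x|^{p-2}\tilde\gamma(d(x))=o(|x|^{p-1}d(x))$. The uniform statement over $|x|=o(n^{p/(2(p-1))})$ is obtained by truncating the tail sum at an index adapted to $|x|$, so that both the bulk difference $|V_n(x)-V(x)|$ and the discarded tail vanish uniformly.

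\textbf{Main obstacle.} The most delicate step is closing the a priori bound $V_n(x)\le C(1+u(x))$ uniformly in $n$: without KMT coupling one cannot import the estimate directly from~\cite{randomwalkinconesrevisited}, and the self-referential inequality must be resolved using only the FCLT and large-deviation bounds for martingales. Trajectories that repeatedly enter the boundary layer of $K$ are the main source of difficulty, since there the bound on $g$ degenerates and one has to argue that such excursions still produce a summable total error; the case $p=2$, where the refined moment assumption $\e[Y^2\log(1+Y)]<\infty$ is needed to extract a sufficiently small $\tilde\gamma$, requires particular care.
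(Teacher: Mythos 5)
Your plan starts from the same telescoping identity as the paper (with $f(y)=\e_y[u(X(1))]-u(y)$ in place of your $g$) but then diverges from the paper's mechanism in a way that leaves a genuine gap.

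First, the bound you announce for $g$, namely $|g(y)|\le C|y|^{p-2}\tilde\gamma(d(y))$, is not correct near the boundary, because the derivative bounds you use are wrong there. The estimate from~\eqref{eqn:bound.u} is $|\partial^\alpha u(y)|\le C\,u(y)/d(y)^{|\alpha|}$, which with $u(y)\le C|y|^{p-1}d(y)$ gives $|\nabla u|\lesssim |y|^{p-1}$, $|D^2 u|\lesssim |y|^{p-1}/d(y)$ and $|D^3 u|\lesssim |y|^{p-1}/d(y)^2$ — not $|y|^{p-2}$ and $|y|^{p-3}$. The correct conclusion, proved in Lemma~\ref{lemma:f}, is $|f(y)|\le\tilde\varepsilon(d(y))\,|y|^{p-1}d(y)^{-1}\gamma(d(y))=\tilde\varepsilon(d(y))\beta(y)$, which is substantially larger than your bound whenever $d(y)\ll|y|$; this matters precisely because the chain keeps visiting the boundary layer.

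Second, and more importantly, your proposal never supplies the mechanism that makes $\sum_n\e_x[|f(X(n))|;\tau>n]$ finite. You propose to feed a tail bound $\pr_x(\tau>n)\lesssim u(x)n^{-p/2}$ (which itself cannot be obtained from the FCLT alone — in the paper it only comes after the supermartingale $V_\beta$ is available) together with the FCLT placing $d(X(n))\asymp\sqrt n$ into the sum and ``bootstrap''; but $\beta$ blows up like $d(X(n))^{-1}$ on trajectories hugging $\partial K$, and this is exactly where the heuristic breaks. You flag this as the ``main obstacle'' but offer no way to resolve the self-referential estimate. The paper's resolution is structural: introduce the Green potential $U_\beta(y)=\int_K G(x,y)\beta(x)\,dx$ satisfying $U_\beta\le\varepsilon(R)u$ and $\Delta U_\beta=-\beta$, and form $V_\beta(x)=u(Rx_0+x)+3U_\beta(Rx_0+x)$; Lemma~\ref{lemma:f_beta} shows $V_\beta(X(n))\mathbf 1\{\tau>n\}$ is a non-negative supermartingale whose expected decrement dominates $\frac12\beta(Rx_0+X(n))$, and summing the decrements yields $\e_x\bigl[\sum_{k=0}^{\tau-1}\beta(Rx_0+X(k))\bigr]\le 2V_\beta(x)$. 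This single inequality replaces the bootstrap you have in mind and delivers both summability of the error series and the a priori bound $V_n(x)\le Cu(x+Rx_0)$, after which the martingale $Z_n=u(Rx_0+X(n\wedge\tau))-\sum_{k<n\wedge\tau}f(Rx_0+X(k))$, optional stopping, and dominated convergence give the representation formula, finiteness, positivity, harmonicity, and the stated asymptotics. Without some analogue of this potential-theoretic comparison, your argument does not close.
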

Next we proceed to the tail  asymptotics for $\tau$.
Our next result   extends  
the corresponding result 
in~\cite[Theorem 3]{randomwalkinconesrevisited}.  
\begin{theorem}\label{thm:tau}
Let $X$  satisfies   Assumption~\ref{assumption:k}, \ref{assumption:x} and \ref{ass-incre2}.
Assume that  $p \ge 1$.  
Then the following is true. 
\begin{enumerate}[(i)]
\item  There exist  constants $C$ and $R$ such that,
\begin{equation}
    \mathbf{P}_x(\tau>n) \le C \frac{u(x+Rx_0)}{n^{\frac{p}{2}}}, 
    \quad x\in K. 
\end{equation}
\item Uniformly in $x$ such that $|x|=o(\sqrt n)$, 
\begin{equation}
    \mathbf{P}_x(\tau >n) \sim \varkappa \frac{V(x)}{n^{\frac{p}{2}}}, 
    \quad n\to \infty. 
\end{equation}
\item 
For any compact set $D \subset K$, with $|x|=o(\sqrt n)$,  
\begin{equation}
    \mathbf{P}_x \left( \frac{X(n)}{\sqrt{n}} \in D \mid \tau >n \right) \to  c\int_D u(z)\exp\left\{-\frac{|z|^2}{2}\right\} d z, 
    \quad n\to \infty. 
\end{equation}
\end{enumerate}
\label{thm:p.tau}
\end{theorem}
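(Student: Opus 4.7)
The plan is to follow the framework of~\cite{randomwalkinconesrevisited}, substituting Donsker's FCLT and martingale moment inequalities for every use of KMT strong coupling. Part~(i) is proved first, providing the a priori polynomial decay used throughout, and parts~(ii) and~(iii) are proved in tandem by a two-scale time decomposition.

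For (i), I would start from the exact martingale identity $V(x)=\mathbf{E}_x[V(X(n));\tau>n]$ guaranteed by the harmonicity of $V$ in Theorem~\ref{thm:harmonic.function}. Combined with the lower bound $V(y)\gtrsim|y|^{p-1}d(y)$ coming from Assumption~\ref{assumption:k} and the asymptotic $V\sim u$, this gives $V(x)\ge c\,n^{p/2}\mathbf{P}_x(\tau>n,\,G_n)$, where $G_n=\{c_1\sqrt n\le|X(n)|\le c_2\sqrt n,\ d(X(n))\ge c_1\sqrt n\}$ is the bulk event. It remains to show that $G_n$ captures a definite fraction of $\{\tau>n\}$ after shifting the starting point by $x\mapsto x+Rx_0$: the shift can be absorbed into one Markov step at the cost of a single-step tail estimate from~(SM), and it lifts boundary-adjacent $x$ into the interior. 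This final step is where Donsker's FCLT enters, as the scaled chain started at $x+Rx_0$ is close to Brownian motion; the corresponding Brownian exit bound together with $V\sim u$ then delivers $\mathbf{P}_x(\tau>n)\le Cu(x+Rx_0)/n^{p/2}$.

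For (ii) and (iii), I would fix an intermediate time $k=k_n\to\infty$ with $k_n=o(n)$ and apply the Markov property at time $n-k$:
\[
\mathbf{E}_x[g(X(n)/\sqrt n);\tau>n]=\mathbf{E}_x\bigl[\mathbf{E}_{X(n-k)}[g(X(k)/\sqrt n);\tau>k];\ \tau>n-k\bigr]
\]
for continuous bounded test functions $g$ supported in $K$. On the typical set $|X(n-k)|\asymp\sqrt n$, $d(X(n-k))\gtrsim\sqrt n$, the displacement of the chain over the remaining $k$ steps is $o(\sqrt n)$ while $\partial K$ is at distance $\asymp\sqrt n$, so Donsker's FCLT applied to the short final segment gives $\mathbf{E}_{X(n-k)}[g(X(k)/\sqrt n);\tau>k]=g(X(n-k)/\sqrt n)+o(1)$. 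Specialising to $g=u$, using the homogeneity $u(y)=|y|^p m_1(y/|y|)$ and the asymptotic $\mathbf{E}_x[u(X(n));\tau>n]\sim V(x)$ from Theorem~\ref{thm:harmonic.function} yields
\[
\mathbf{E}_x\bigl[u(X(n-k)/\sqrt n);\tau>n-k\bigr]\sim V(x)/n^{p/2}.
\]
A parallel Donsker argument on the long initial segment of length $n-k$ identifies the limit of $n^{p/2}\mathbf{E}_x[g(X(n-k)/\sqrt n);\tau>n-k]$ with a multiple of $\int_K g(z)u(z)e^{-|z|^2/2}\,dz$, coming from the classical $h$-transform description of Brownian motion conditioned to stay in $K$. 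Matching the two computations at $g=u$ pins down the normalisation and produces $\varkappa$, and then taking $g=\mathbf{1}_D$ (via a continuous approximation) delivers~(iii).

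The principal obstacle is that without strong coupling the FCLT gives only pointwise weak convergence in the initial condition, whereas both decompositions integrate inner limits over random starting points. Interchange of expectation and limit must therefore be justified by truncating on the typical bulk sets and controlling the complement via the a priori bound~(i) together with the maximal estimate for $\mathbf{P}_y(\sup_{j\le k}|X(j)-y|\ge\eta\sqrt n)$ obtained from Doob's inequality applied to the martingales $X$ and $|X(n)|^2-dn$ (supplied by Assumption~\ref{ass-incre2}) and the stochastic majorization in Assumption~\ref{assumption:x}. The restriction $|x|=o(\sqrt n)$ ensures that the initial scaled point $x/\sqrt n$ tends to the apex of the cone, so that the limiting conditional density is exactly $u(z)e^{-|z|^2/2}$ up to normalisation, matching the statement in~(iii).
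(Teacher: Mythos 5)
Your overall strategy — replace KMT coupling with Donsker's FCLT plus martingale estimates, split the time axis, and absorb the boundary shift into $Rx_0$ — is the right spirit, but two of the central steps as written are circular and a third is missing a genuinely hard ingredient that the paper supplies.

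For part (i), your chain of inequalities correctly gives $V(x)\ge c\,n^{p/2}\pr_x(\tau>n,\,G_n)$ with $G_n$ the bulk event, but the step ``$G_n$ captures a definite fraction of $\{\tau>n\}$'' is exactly the statement that, conditionally on survival, $X(n)/\sqrt n$ has no mass near the boundary or near infinity. That is essentially part (iii), so you are assuming what you want to prove. Donsker's theorem cannot rescue this: for a fixed starting point it gives convergence of $\pr_x(\tau>n)$ to a Brownian exit probability, but both sides go to zero at rate $n^{-p/2}$, so weak convergence of the paths tells you nothing about the ratio of the two events on this scale, nor anything uniform in $x$ near $\partial K$. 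The paper instead proves a new near-boundary estimate, $\sup_{d(x)\le\varepsilon\sqrt n}\pr_x(\tau>n)\le c\varepsilon^q$ (its Lemma~\ref{lemma:bound.near.boundary}), using optional stopping for the martingale $|X(n)|^2-dn$, auxiliary right circular cones with smaller homogeneity exponent, and a scaling/geometry argument; that lemma and the supermartingale $V_\beta$ feed a bootstrap over $\varepsilon$ and a geometric iteration down dyadic time scales (its Lemma~\ref{lemma:bound.p.tau}) to produce the uniform $n^{-p/2}$ upper bound. Nothing in your outline plays the role of this lemma.

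For parts (ii) and (iii), the reversed decomposition (long segment first, short segment of length $k$ last) does not avoid the problem, because after freezing $g$ on the short leg you are left with $n^{p/2}\e_x[g(X(n-k)/\sqrt n);\tau>n-k]$, which is exactly the quantity you started with. The claim that a ``parallel Donsker argument on the long initial segment'' identifies this limit as a multiple of $\int_K g(z)u(z)e^{-|z|^2/2}dz$ is precisely the assertion of (iii) at time $n-k$, and FCLT on its own does not deliver it: Donsker controls bulk probabilities that are bounded away from $0$, not $n^{-p/2}$-scale tails, and uniformity in the (random) starting point is not free. The paper goes the other way: it cuts at a short time $m=o(n)$, partitions $K$ at time $m$ into a near-boundary slab, a bulk annulus, and a far region; the near-boundary piece is killed by part (i), the far piece by a uniform tail bound $\e_x[V_\beta(X(m));\tau>m,|X(m)|>A\sqrt m]\le f(A)V_\beta(x)$ (its Proposition~\ref{prop.A}), proved by a dyadic recursion with the Fuk-Nagaev inequality; on the bulk annulus the starting point is a bounded positive distance from $\partial K$ after rescaling, so FCLT against the known Brownian answer does apply uniformly; and the $m$-step prefactor $\e_x[u(X(m));\tau>m]$ converges to $V(x)$ by Theorem~\ref{thm:harmonic.function}. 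Your Doob maximal inequality and truncation to ``typical bulk sets'' are not a substitute for Proposition~\ref{prop.A}: Doob gives control of $\sup_j|X(j)-y|$ over the short block, but what is needed is that the expectation of the super-linear function $V_\beta$ over the far tail of $X(m)$ is small relative to $V_\beta(x)$, which is a large-deviation statement requiring the Fuk-Nagaev machinery. So the proposal is missing two key lemmas (the near-boundary estimate and the far-tail estimate for $V_\beta$), and the two places where it appeals to Donsker are the two places where Donsker alone is insufficient.
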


The proof of 
Theorem~\ref{thm:harmonic.function} is similar 
to~\cite{randomwalkinconesrevisited}. 
We will  provide a part of the proof in more details, 
but will refer to  the reader to~\cite{randomwalkinconesrevisited} 
 in some parts. 
 The main improvement of the present paper is 
 in the proof of Theorem~\ref{thm:tau}. 
 There is no analogue of the KMT coupling 
 for Markov chains under consideration and 
we present a new  approach relying on  FCLT.  
 This is a more robust approach as FCLT is available 
 in many situations, while KMT coupling is very specific for random walks.  
 We expect that this improvement will be of great use in many other situations. 

 The paper is organised as follows. 
In Section~\ref{sec:prelim} we first give 
estimates for harmonic and Green function of Brownian motion. 
Then we estimate the error of the diffusion approximation 
given in~\eqref{eqn:error}. 
In Section~\ref{sec:harmonic} we build on these estimates 
and construct the harmonic function for the Markov  
chain and study its asymptotic properties. 
in Section~\ref{sec:upper} we obtain upper bounds 
for $\pr_x(\tau>n)$. 
In the remaining two sections we present the proofs of Theorems.

%In this paper, we extend theorem \ref{thm:harmonic.function}
% and \ref{thm:p.tau} from independent process in \cite{randomwalkinconesrevisited} to the dependent process through the non-negative r.v. $Y$ in assumption \ref{assumption:x}. The harmonic function is also constructed by a similar supermartingale. Hence, it allows one to obtain the asymptotic properties of the harmonic function. On the other hand, the estimation of tail distribution $\pr_x (\tau>n)$ is proved by the Skorokhod coupling instead of the KMT coupling, which also simplifies the construction when $p \ge 2$. The convergence to Brownian Motion allows us to estimate the embedding theorem through the oscillation of Brownian motion. Pengel and Bierkens \cite{pengel2021strong} derived the strong invariance principles for Ergodic Markov Process. Eberlein \cite{eberlein1986strong} gave an approximation with order $O(t^{\frac{1}{2}-k})$ for sequences of dependent random variables. In the book from Lawler and Limic \cite{lawler2010random}, the Skorokhod embedding of Random Walk is discussed. Naturally, it is reasonable to extend this result beyond i.i.d. setting.
\section{Preliminary estimates}\label{sec:prelim}   
\subsection{Estimates for the harmonic function and Green function}
In this subsection we will collect 
the bounds for  harmonic function $u(x)$ and Green function of Brownian motion 
from \cite{randomwalkinconesrevisited} 
for the  convenience of the reader. 

We will start with a simple bound for the distance $d(x)$ of $x\in K$ to the boundary of the cone $K$, 
\begin{equation}
\label{eqn:bound.d}
    d(x) \le |x|, 
\end{equation}
To state the next bound recall the multi-index notation for the partial derivatives 
\[
\frac{\partial^\alpha}{\partial x_\alpha } 
=\frac{\partial^{\alpha_1}}{\partial x_{\alpha_1}}
\cdots  
\frac{\partial^{\alpha_d}}{\partial x_{\alpha_d}},  
\]
where  $\alpha =(\alpha_1, \alpha_2, \cdots,\alpha_d)$ 
and $\alpha_j$ is a non-negative integer for each 
$j=1,\ldots,d$.  
\begin{lemma}
There exists a constant $C=C(d)$ such that for $x \in K$ and $\alpha$: $|\alpha|:= \alpha_1+\cdots+\alpha_d \le 3$,
\begin{equation}
    \left|\frac{\partial^\alpha u(x)}{\partial x_\alpha}\right| \le C \frac{u(x)}{{d(x)}^{|\alpha|}}
    \label{eqn:bound.u}
\end{equation}
\end{lemma}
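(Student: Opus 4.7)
The plan is to combine two classical tools for harmonic functions: interior derivative estimates (Cauchy-type bounds for derivatives of harmonic functions on a ball) and Harnack's inequality. The key observation is that since $u$ is harmonic on $K$ and vanishes on $\partial K$, and $d(x)$ is by definition the distance from $x$ to $\partial K$, the open ball $B(x,d(x))$ is entirely contained in $K$. On this ball, $u$ is a nonnegative harmonic function, which is exactly the setting both of the above tools are tailored for.

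First, I would invoke the standard interior derivative estimate: for any harmonic function $h$ on $B(x_0,r)$ and any multi-index $\alpha$, one has
\[
|D^\alpha h(x_0)| \le \frac{C_{|\alpha|,d}}{r^{|\alpha|}} \sup_{B(x_0,r/2)} |h|,
\]
where $C_{|\alpha|,d}$ depends only on $|\alpha|$ and the dimension $d$. Applying this to $u$ with the choice $r = d(x)$ yields
\[
\left|\frac{\partial^\alpha u(x)}{\partial x_\alpha}\right| \le \frac{C_{|\alpha|,d}}{d(x)^{|\alpha|}} \sup_{B(x,d(x)/2)} u.
\]

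Second, to replace the sup by $u(x)$ itself, I would apply Harnack's inequality. Since $u \ge 0$ is harmonic on the full ball $B(x,d(x))$, Harnack on the concentric sub-ball $B(x,d(x)/2)$ gives
\[
\sup_{B(x,d(x)/2)} u \le C_d \, u(x),
\]
with $C_d$ depending only on $d$. Composing the two estimates produces the claimed inequality with $C = C_{|\alpha|,d} \cdot C_d$, and since only multi-indices with $|\alpha| \le 3$ appear, a single absolute constant $C=C(d)$ suffices.

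The potential obstacle here is modest: one only has to verify that the constants in both the interior derivative bound and in Harnack are intrinsic (depending on $d$ and $|\alpha|$ alone) and not on the geometry of the cone. This is classical because both facts are statements about nonnegative harmonic functions on Euclidean balls, and the only cone-dependent input is the containment $B(x,d(x))\subset K$, which is automatic from the definition of $d(x)$. A cosmetically different route would be to use the homogeneity $u(\lambda x)=\lambda^p u(x)$ and $d(\lambda x)=\lambda d(x)$ to observe that the ratio $|D^\alpha u(x)| d(x)^{|\alpha|} / u(x)$ is scale invariant, reducing to $|x|=1$; however, since the bound must remain uniform as $d(x)\to 0$ on that cross-section, one still needs precisely the Harnack + Cauchy-estimate input above, so the direct argument is cleanest.
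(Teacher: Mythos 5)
Your proof is correct and is the standard argument for this type of gradient estimate: combine interior derivative (Cauchy) estimates for harmonic functions on a ball with Harnack's inequality on that ball, using $B(x,d(x))\subset K$. The paper does not supply a proof of this lemma (it is collected from~\cite{randomwalkinconesrevisited} as a known preliminary), but your Harnack-plus-Cauchy route is precisely how such bounds are derived there and in the literature; the only small point to mind is to pick radii so that the inner ball is compactly contained in the region of harmonicity (e.g.\ apply the Cauchy estimate with the sup over $B(x,d(x)/2)$ and Harnack on $B(x,3d(x)/4)$), which you have effectively done.
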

The following inequalities for 
increments of  $u(x)$ have been proved in~\cite[Lemma~2.3]{DW19}.
\begin{lemma}\label{lem:u-diff}
Assume that equation~\eqref{eqn:u.harmonic.boundary} holds. Let $x, y\in K$. Then, 
there exists a constant $C>0$ such that 
  \begin{equation}
    \label{diff-bound}
    |u(x+y)-u(x)|\le  C|y|\left(|x|^{p-1}+|y|^{p-1}\right)
  \end{equation}
and, for $|y|\le |x|/2$,
\begin{equation}
    \label{diff-bound1}
    |u(x+y)-u(x)|\le  C|y||x|^{p-1}.
  \end{equation}
In addition, when $p<1$,  
\begin{equation}
  \label{diff-bound2}
  |u(x+y)-u(x)|\le  C|y|^{p}. 
\end{equation}
 \end{lemma}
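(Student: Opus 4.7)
The plan is to prove the intermediate bound~\eqref{diff-bound1} first, and then deduce~\eqref{diff-bound} and~\eqref{diff-bound2} by size dichotomies using the pointwise bound~\eqref{eqn:u.main.bound}.

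For~\eqref{diff-bound1}, I would fix $x,y \in K$ with $|y| \le |x|/2$ and split into two cases according to $d(x)$. When $d(x) > |y|$, the entire segment $\{x+ty : t\in[0,1]\}$ lies in $K$, since it is contained in the open ball $B(x,d(x))$. Combining the gradient bound~\eqref{eqn:bound.u} (applied with $|\alpha|=1$) and the upper estimate~\eqref{eqn:u.harmonic.boundary} from Assumption~\ref{assumption:k} gives $|\nabla u(z)| \le Cu(z)/d(z) \le C|z|^{p-1}$ for $z \in K$; since $|x+ty|$ is comparable to $|x|$ along the segment, integration yields the desired estimate. When $d(x) \le |y|$ the segment may exit $K$, so one has to handle the two endpoints separately via~\eqref{eqn:u.harmonic.boundary}: it gives $u(x) \le C|x|^{p-1}d(x) \le C|x|^{p-1}|y|$ directly, and using $d(x+y) \le d(x)+|y| \le 2|y|$ together with $|x+y|^{p-1} \le C|x|^{p-1}$ (which is immediate in both cases $p \ge 1$ and $p < 1$ since $|x+y|$ is comparable to $|x|$), it also yields $u(x+y) \le C|x|^{p-1}|y|$. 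The triangle inequality finishes this case.

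For~\eqref{diff-bound}, if $|y| \le |x|/2$ the bound is immediate from~\eqref{diff-bound1}; if $|y| > |x|/2$, then both $|x|$ and $|x+y|$ are at most $3|y|$, so~\eqref{eqn:u.main.bound} gives $u(x), u(x+y) \le C|y|^p = C|y| \cdot |y|^{p-1}$, which is absorbed into the right-hand side. For~\eqref{diff-bound2} with $p<1$, the large-$|y|$ case is handled the same way; for $|y| \le |x|/2$, the facts that $p-1<0$ and $|x| \ge 2|y|$ give $|x|^{p-1} \le C|y|^{p-1}$, so~\eqref{diff-bound1} converts to $|u(x+y)-u(x)| \le C|y|^p$.

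The main obstacle is the case $d(x) \le |y|$ in~\eqref{diff-bound1}, where the gradient-integration argument breaks down because the segment from $x$ to $x+y$ may leave $K$. There one has no choice but to bound $u(x)$ and $u(x+y)$ separately via the boundary upper bound~\eqref{eqn:u.harmonic.boundary} supplied by the geometric Assumption~\ref{assumption:k}; the key observation is that the smallness of $d(x)$ relative to $|y|$ is precisely what allows the factor $d(x)$ from~\eqref{eqn:u.harmonic.boundary} to be absorbed into the target factor $|y|$ on the right-hand side.
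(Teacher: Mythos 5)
Your proof is correct and follows the standard argument — the paper itself does not reprove this lemma but refers to \cite[Lemma~2.3]{DW19}, whose proof uses exactly this dichotomy: when the segment $[x,x+y]$ stays in $K$ (i.e.\ $|y|<d(x)$), combine the gradient bound \eqref{eqn:bound.u} with \eqref{eqn:u.harmonic.boundary} and integrate; otherwise bound $u(x)$ and $u(x+y)$ separately via \eqref{eqn:u.harmonic.boundary} and the Lipschitz property of $d(\cdot)$, then deduce \eqref{diff-bound} and \eqref{diff-bound2} from \eqref{diff-bound1} and \eqref{eqn:u.main.bound} by the size split $|y|\lessgtr|x|/2$.
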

Let  $G(x,y)$ be  the Green function of Brownian motion killed 
on leaving the cone $K$.

\begin{lemma}\label{lem:green.bound0}
    Let the assumption~\ref{assumption:k} holds. 
    Then, for any $A>0$ there exists   a constant $C_A$ such that 
\begin{equation}
 \label{eq:green.function.bound0}
 G(x,y)  \le C_A \widehat G(x,y), 
 \end{equation} 
where 
\begin{equation}\label{eq:ghat}
 \widehat G(x,y)  := %\frac{u(x)u(y)}{\max(|x|,|y|)^{2p-2}|x-y|^d}.
\begin{cases}
    \frac{u(x)u(y)}{|y|^{d-2+2p}},& |x|\le |y|, |x-y|\ge A|y|\\
     \frac{u(x)u(y)}{|x|^{d-2+2p}},& |y|\le |x|, |x-y|\ge A|y|\\ 
     \frac{u(x)u(y)}{|x|^{p-1}|y|^{p-1}} \frac{1}{|x-y|^d}
                                   &d(y)/2\le |x-y| \le A|y|\\
                                   \frac{1}{|x-y|^{d-2}}I(d>2)
                                   +\log\left(\frac{d(y)}{|x-y|}\right)I(d=2)
                                   & |x-y|<d(y)/2.
 \end{cases}
 \end{equation}
 \end{lemma}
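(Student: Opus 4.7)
The plan is to derive the four case bound from the heat-kernel representation
$G(x,y)=\int_0^\infty p_t^K(x,y)\,dt$, where $p_t^K$ denotes the transition density of Brownian motion killed on $\partial K$, and then split the time integral and the underlying geometry according to the relative positions of $x$, $y$ and the boundary. Assumption~\ref{assumption:k}, through the two-sided comparison in~\eqref{eqn:u.harmonic.boundary}--\eqref{eqn:u.harmonic.boundary.low}, lets us trade $d(x)$ against $u(x)/|x|^{p-1}$ (and similarly for $y$) whenever such a substitution is needed to match the stated form of $\widehat G$.

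First I would treat the near-diagonal regime $|x-y|<d(y)/2$. Here the straight segment from $x$ to $y$ lies in the ball $B(y,d(y)/2)\subset K$, so the pointwise bound $p_t^K(x,y)\le p_t(x,y)$ with the free heat kernel $p_t$ is already of the right order; integrating produces the Newtonian potential $|x-y|^{2-d}$ for $d>2$ and the logarithmic expression for $d=2$, exactly the last line of \eqref{eq:ghat}. For the far-field regime $|x-y|\ge A|y|$ I would invoke the sharp killed heat kernel estimate available in Lipschitz cones (see the corresponding lemma in~\cite{randomwalkinconesrevisited}): for $t\gtrsim\max(|x|,|y|)^2$ one has $p_t^K(x,y)\lesssim t^{-d/2-p}u(x)u(y)$, while for smaller $t$ the Gaussian factor $e^{-c|x-y|^2/t}$ renders the contribution of the same order after integration. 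Integrating the large-time tail from $\max(|x|,|y|)^2$ to $\infty$ produces $u(x)u(y)/\max(|x|,|y|)^{d-2+2p}$, giving the first two lines of $\widehat G$ (the split between $|x|\le|y|$ and $|y|\le|x|$ is only a matter of which variable dictates the cross-over time).

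The intermediate regime $d(y)/2\le |x-y|\le A|y|$ is the delicate one, and is where I expect the main obstacle to lie. At this scale $x$ and $y$ sit at comparable distance from the origin but both feel the boundary, so the free heat kernel is too large and the purely large-time estimate from Step 2 is too small. The strategy is to apply the boundary Harnack principle in each variable separately to peel off the factors $u(x)/d(x)^{p-1}$ and $u(y)/d(y)^{p-1}$, combined with the Gaussian upper bound $p_t^K(x,y)\lesssim t^{-d/2}e^{-c|x-y|^2/t}$ for the remaining bulk. Splitting the time integral at $t_0=|x-y|^2$ and computing each piece yields a contribution of order $\tfrac{u(x)u(y)}{d(x)^{p-1}d(y)^{p-1}}\cdot |x-y|^{-d}$. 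A final application of~\eqref{eqn:u.harmonic.boundary.low} converts $d(x)^{p-1}d(y)^{p-1}$ into $|x|^{p-1}|y|^{p-1}$ up to a constant, producing the third line of $\widehat G$; this substitution is precisely what Assumption~\ref{assumption:k} was designed to permit. The hard part is the Harnack factorization step, but, since the underlying heat kernel and Green function estimates in cones are already established in~\cite{randomwalkinconesrevisited} in exactly the form we need, the proof essentially reduces to quoting those estimates and reorganizing them into the four cases of~\eqref{eq:ghat}.
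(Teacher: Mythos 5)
Your proposal takes a genuinely different route from the paper, and the route you chose contains a real gap in the intermediate regime. The paper's proof is far more direct: it simply quotes Remark~3.1 of Hirata~\cite{H06}, which gives the single inequality
\[
G(x,y)\le C\,\frac{u(x)u(y)}{\max(|x|,|y|)^{2p-2}\,|x-y|^d},
\]
and then derives the first two lines of~\eqref{eq:ghat} by elementary case distinctions on $|x|$ versus $|y|$ (using $|x-y|\ge A|y|$ and, in one subcase, $|x-y|>|x|/2$). The remaining two lines are obtained directly (near-diagonal Newtonian bound, and for $d=2$ the log version) and the paper refers to~\cite{randomwalkinconesrevisited} for those. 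There is no heat-kernel time integration and no boundary Harnack principle in the paper's argument; everything follows from one quoted Green-function estimate plus arithmetic.

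The problem in your intermediate case $d(y)/2\le|x-y|\le A|y|$ is concrete. First, you multiply a boundary-Harnack ``peeled-off'' factor against the Gaussian upper bound $p_t^K(x,y)\lesssim t^{-d/2}e^{-c|x-y|^2/t}$; but that Gaussian bound is already a pointwise upper bound on the killed kernel, so inserting an additional multiplicative boundary factor double-counts and is not a valid inequality. Second, the factors you claim to extract, $u(x)/d(x)^{p-1}$ and $u(y)/d(y)^{p-1}$, are not what a boundary Harnack comparison yields; the natural boundary factor at scale $r$ is of order $d(x)/r$ (equivalently, via Assumption~\ref{assumption:k}, $u(x)/(|x|^{p-1}r)$). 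Third, your closing step ``converts $d(x)^{p-1}d(y)^{p-1}$ into $|x|^{p-1}|y|^{p-1}$'' by invoking~\eqref{eqn:u.harmonic.boundary.low}; this is false. Assumption~\ref{assumption:k} gives $u(x)\asymp |x|^{p-1}d(x)$, which says nothing of the form $d(x)\asymp|x|$: near the boundary $d(x)/|x|$ can be arbitrarily small, so $d(x)^{p-1}$ and $|x|^{p-1}$ are not comparable and the substitution does not go through. Because the target expression $\frac{u(x)u(y)}{|x|^{p-1}|y|^{p-1}}\frac{1}{|x-y|^d}$ is, via Assumption~\ref{assumption:k}, comparable to $\frac{d(x)d(y)}{|x-y|^d}$, the correct way to reach it by your method would be to extract boundary factors $d(x)/|x-y|$ and $d(y)/|x-y|$ from the kernel before integrating, or, as the paper does, to get it in one line from Hirata's bound.

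Your near-diagonal step is fine. Your far-field step is plausible but relies on a killed heat-kernel decay estimate $p_t^K(x,y)\lesssim t^{-d/2-p}u(x)u(y)$ that is nowhere stated in this paper and would itself require justification under merely Lipschitz regularity; the paper sidesteps this entirely by working with the quoted Green-function bound.
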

\begin{proof} 
    Only a sketch of the proof for $d=3$ was given 
    in~\cite{randomwalkinconesrevisited} 
    for the first 2 regions of~\eqref{eq:ghat}. 
    For that reason we will elaborate estimates in these regions a little bit more.
    The estimates were derived from~\cite[Remark 3.1]{H06} that states  
    \[
      G(x,y) \le C\frac{u(x)u(y)}{\max(|x|,|y|)^{2p-2}|x-y|^d}.   
    \]
    When $|x|\le |y|, |x-y|\ge A|y|$ we immediately obtain from this inequality , 
    \[
        G(x,y) \le \frac{C}{A^d}
        \frac{u(x)u(y)}{|y|^{2p-2}|y|^d} 
        = \frac{C}{A^d}
        \frac{u(x)u(y)}{|y|^{d-2+2p}}.
    \]
    When $|y|\le |x|, |x-y|\ge A|y|$ 
    we further consider two cases. 
    First, when $|y|\le |x| \le 2|y|, |x-y|\ge A|y|$ we write 
    \[
        G(x,y) \le \frac{C}{A^d}
        \frac{u(x)u(y)}{|x|^{2p-2}|y|^d} 
        = \frac{C2^d}{A^d}
        \frac{u(x)u(y)}{|x|^{d-2+2p}}.
    \]
    Second, when $|x|>2|y|$ we can observe that 
    $|x-y|\ge |x|-|y|> |x|-\frac{1}{2}|x|$. Then, 
    \[
        G(x,y) \le 
        \frac{C2^d}{A^d}
        \frac{u(x)u(y)}{|x|^{d-2+2p}}.
    \]
    The estimates for other regions and $d=2$ 
    are more direct and 
    were explained in~\cite{randomwalkinconesrevisited}. 

\end{proof}    

\subsection{Estimates for the error term}
Let $f$ be defined as follows 
\begin{equation}
    f(x) := \e_x [u(X(1))]-u(x),\quad x\in K. 
    \label{eqn:error}
\end{equation}
Recall that $u$ is harmonic for Brownian motion, 
and the means and covariances of $X$ 
are close to those of standard Brownian motion. 
Then function $f$ describes the error in the diffusion approximation 
of harmonic function of $X$ with harmonic function $u$ of Brownian motion.  
We will use this function to construct the exact positive harmonic function  for Markov chain $X$. 
The first step in this construction is the following estimate.  
\begin{lemma}\label{lemma:f}
Let Assumption \ref{assumption:k}, Assumption \ref{assumption:x}  and Assumption \ref{assumption:increment} hold.  
Then, there exists  a function 
$\widetilde \varepsilon(t)$ 
such that $\widetilde \varepsilon(t) \downarrow 0$  as $t\to \infty$ 
and  
\begin{equation}
    |f(x)| \le\widetilde \varepsilon(d(x))  \beta(x), % \quad  d(x) \to  \infty, 
\end{equation}
where 
\begin{equation}\label{defn.beta}
    \beta(x) :=|x|^{p-1} d(x)^{-1} \gamma(d(x))
\end{equation}
and 
$\gamma(t)$ is the slowly varying, monotone decreasing differentiable function 
from Assumption~\ref{assumption:increment}. 
\end{lemma}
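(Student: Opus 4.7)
The plan is to analyse $f(x) = \e_x[u(X(1)) - u(x)]$ by a second-order Taylor expansion of $u$ around $x$, crucially exploiting that $\Delta u \equiv 0$ on $K$. Writing $\xi := X(1) - x$, I would first split according to jump size,
\[
f(x) = \e_x\!\left[(u(x+\xi) - u(x))\mathbf{1}\{|\xi| \le d(x)/2\}\right] + \e_x\!\left[(u(x+\xi) - u(x))\mathbf{1}\{|\xi| > d(x)/2\}\right],
\]
so that on the small-jump event the whole segment from $x$ to $x+\xi$ remains in a ball of radius $d(x)/2$ contained in $K$, where~\eqref{eqn:bound.u} gives the uniform bound $|\partial^\alpha u|\le C u(x)/d(x)^{|\alpha|}$ for $|\alpha|\le 3$.

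On the small-jump event Taylor's formula produces
\[
u(x+\xi)-u(x) = \nabla u(x)\cdot\xi + \tfrac{1}{2}\sum_{i,j}\partial_{ij}u(x)\,\xi_i\xi_j + R(x,\xi),\qquad |R(x,\xi)|\le C\frac{u(x)}{d(x)^3}|\xi|^3.
\]
After rewriting each truncated moment as the untruncated moment minus a large-jump correction, I would treat the three pieces as follows. The linear term is $\nabla u(x)\cdot \e_x[\xi]$, bounded via $|\nabla u(x)|\le Cu(x)/d(x)\le C|x|^{p-1}$ and $|\e_x[\xi]| = o(\gamma(d(x))/d(x))$ from Assumption~\ref{assumption:increment}, so it is $o(\beta(x))$. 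The quadratic term is the decisive one: since $\sum_i\partial_{ii}u(x) = \Delta u(x) = 0$ the diagonal cancels exactly, leaving $\sum_{i,j}\partial_{ij}u(x)\,(\e_x[\xi_i\xi_j]-\delta_{ij})$; by Assumption~\ref{assumption:increment} this is at most $o(\gamma(d(x)))\cdot O(u(x)/d(x)^2) = o(|x|^{p-1}\gamma(d(x))/d(x))$. The cubic remainder is handled using the stochastic majoriser from Assumption~\ref{assumption:x}: the truncated third moment $\e[Y^3; Y\le d(x)/2]$ is of order $d(x)^{3-p}$ up to a slowly varying factor supplied by Remark~\ref{rem:slow}, producing a term of size $O(u(x)/d(x)^p)$, which is again $o(\beta(x))$ because $1/d(x)^{p-1}$ is $o(\gamma(d(x)))$ by slow variation.

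For the large-jump part I would invoke Lemma~\ref{lem:u-diff}: on $\{d(x)/2 < |\xi| \le |x|\}$ one has $|u(x+\xi) - u(x)|\le C|x|^{p-1}|\xi|$, while on $\{|\xi| > |x|\}$ one has $|u(x+\xi)-u(x)|\le C(|x|^{p-1}+|\xi|^{p-1})|\xi|$. Combining these with the tail estimate $\e[Y^p;Y>t] = o(\gamma(t)t^{p-2})$ from Remark~\ref{rem:slow} and layer-cake representations for $\e[Y;Y>t]$ and $\e[Y^p;Y>t]$, each region contributes $o(|x|^{p-1}\gamma(d(x))/d(x))$. Summing the various pieces and taking the monotone majorant of the resulting $o(1)$ factors defines $\widetilde\varepsilon(t)$, which is decreasing and vanishes at infinity.

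The main obstacle I expect is the bookkeeping: several distinct $o(1)$ rates must each be absorbed into the single factor $\gamma(d(x))$, which is possible only because the tail bound of Remark~\ref{rem:slow} allows one to trade powers of the truncation level for the slowly varying function. Simultaneously controlling the cubic Taylor remainder for small jumps and the $|\xi|^p$ contribution from very large jumps by $o(|x|^{p-1}\gamma(d(x))/d(x))$ forces one to apply $\e[Y^p;Y>t] = o(\gamma(t)t^{p-2})$ at the precise truncation level $t = d(x)/2$ and to deploy the two different deterministic increment bounds of Lemma~\ref{lem:u-diff} in the appropriate regions of the large-jump domain.
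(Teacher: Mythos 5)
Your overall plan — split by jump size, second–order Taylor expansion exploiting $\Delta u=0$, control the small–jump remainder through truncated moments of $Y$, and handle large jumps via Lemma~\ref{lem:u-diff} together with $\e[Y^p;Y>t]=o(\gamma(t)t^{p-2})$ — is exactly the paper's argument (the paper phrases the remainder as a $(2+\alpha)$-H\"older term $R_{2,\alpha}(x)|y|^{2+\alpha}$, $\alpha\in(0,1]$, but taking $\alpha=1$ reduces it to your cubic form). The linear and quadratic pieces are handled as you describe.

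There is, however, a genuine gap in your treatment of the cubic remainder. You bound $\e[Y^3;Y\le d(x)/2]$ by ``order $d(x)^{3-p}$ up to a slowly varying factor,'' obtain a remainder term $O\bigl(u(x)/d(x)^p\bigr)$, and then claim this is $o(\beta(x))$ because ``$1/d(x)^{p-1}$ is $o(\gamma(d(x)))$.'' That last assertion is false for $p\le 1$ (the ratio does not vanish), and the intermediate estimate $O(u(x)/d(x)^p)$ against $\beta(x)=|x|^{p-1}\gamma(d(x))/d(x)$ leaves a ratio of order $d(x)^{2-p}/\gamma(d(x))$, which diverges for $p\le 2$. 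The crude bound $\e[Y^3;Y\le t]\le t^{3-p}\e[Y^p]$ is simply too weak to close the argument.

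The repair is to use the sharper Karamata-type estimate that Remark~\ref{rem:slow} actually provides. Since $\pr(Y>y)\le y^{-2}\e[Y^2;Y>y]=o(y^{-2}\gamma(y))$, integration by parts gives
\[
\e[Y^3;Y\le t]\;\le\;3\int_0^t y^2\,\pr(Y>y)\,dy\;=\;o\!\left(\int_0^t\gamma(y)\,dy\right)\;=\;o\bigl(t\,\gamma(t)\bigr),
\]
where the last step is Karamata's theorem for the slowly varying $\gamma$. Feeding $\e[Y^3;Y\le d(x)/2]=o\bigl(d(x)\gamma(d(x))\bigr)$ into your remainder bound $C\,u(x)/d(x)^3$ and using $u(x)\le C|x|^{p-1}d(x)$ yields
\[
\frac{u(x)}{d(x)^3}\,o\bigl(d(x)\gamma(d(x))\bigr)
= o\!\left(\frac{u(x)\gamma(d(x))}{d(x)^2}\right)
= o\!\left(\frac{|x|^{p-1}\gamma(d(x))}{d(x)}\right)
= o(\beta(x)),
\]
which is what you need, uniformly in $p$. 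This is precisely the computation the paper performs with the H\"older exponent $\alpha$ in place of $1$, producing $\e[Y^{2+\alpha};Y\le t]\le \widetilde\varepsilon(t)\,t^\alpha\gamma(t)$. With this correction your argument matches the paper's proof.
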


\begin{proof}
The proof follows closely the corresponding proof 
for random walk in~\cite{randomwalkinconesrevisited}. 
We give this proof in details so that 
it would be clear how to amend the other 
parts of the construction of superharmonic function in~\cite{randomwalkinconesrevisited}. 

Using  the Taylor expansion, 
\begin{equation}
\label{eqn:taylor}
    \left|u(x+y)-u(x) -\nabla u(x) \cdot y - \frac{1}{2} \sum_{i,j} \frac{\partial^2 u(x)}{\partial x_i \partial x_j} y_i y_j\right| 
    \le R_{2,\alpha} (x) |y|^{2+\alpha}
\end{equation}
where
    \begin{equation}
          R_{2,\alpha}(x)= 
          \sup_{i,j}
          [U_{x_i x_j}]_{\alpha, B(x,d(x)/2)}<\infty. 
    \end{equation}
    Here,  we let for $\alpha\in (0,1]$ and any open set  $D$, 
 \[
    [f]_{\alpha, D} := 
    \sup_{y,z\in D, y\neq z}  \frac{|f(y)-f(z)|}{|y-z|^\alpha}. 
 \]
First split  the expectation~\eqref{eqn:error}  in two parts with  $|X(1)-x|\le d(x)/2$ and $|X(1)-x|> d(x)/2$,
\begin{multline*}
    |f(x)| \le \left| \mathbf{E}_x \left[u(X(1))-u(x); |X(1)-x| \le \frac{d (x)}{2}\right] \right| \\
    + \left| \mathbf{E}_x \left[u(X(1))-u(x); |X(1)-x| > \frac{d (x)}{2}\right]\right|.  
\end{multline*}    
Applying equation~\eqref{eqn:taylor} to the first expectation, 
\begin{align*}
     |f(x)| &\le \left|\mathbf{E}_x \left[ \nabla u(x) \cdot (X(1)-x) ; |X(1)-x| \le \frac{d (x)}{2}\right] \right|\\ 
     &+ \left|\mathbf{E}_x \left[ \frac{1}{2} \sum_{i,j} \frac{\partial^2 u(x)}{\partial x_i \partial x_j} (X_i(1)-x_i)(X_j(1)-x_j) ; |X(1)-x| \le \frac{d (x)}{2}\right]\right| \\ 
     &+ \left| R_{2,\alpha}(x)  \mathbf{E}_x\left [|X(1)-x|^{2+\alpha};|X(1)-x|  \le \frac{d (x)}{2}\right] \right| \\&+ \left| \mathbf{E}_x \left[u(X(1))-u(x) ; |X(1)-x| > \frac{d (x)}{2}\right] \right|. 
\end{align*}  
Then we can rewrite the equation above as,
\begin{align}
    \nonumber |f(x)|
    &\le |\nabla u(x)||\mathbf{E}_x[X(1)-x]|+|\nabla u(x)| \left|\mathbf{E}_x \left[|X(1)-x| ; |X(1)-x| > \frac{d (x)}{2}\right]\right| 
    \\\nonumber &+ \left|\mathbf{E}_x \left[ \frac{1}{2} \sum_{i,j} \frac{\partial^2 u(x)}{\partial x_i \partial x_j} (X_i(1)-x_i)(X_j(1)-x_j)\right]\right|
    \\\nonumber &+ \left|\mathbf{E}_x \left[ \frac{1}{2} \sum_{i,j}\frac{\partial^2 u(x)}{\partial x_i \partial x_j} (X_i(1)-x_i)(X_j(1)-x_j) ; |X(1)-x| > \frac{d (x)}{2}\right]\right|
    \\\nonumber  &+ \left|R_{2,\alpha}(x) \mathbf{E}_x\left[|X(1)-x|^{2+\alpha};|X(1)-x| \le \frac{d(x)}{2}\right] \right| \\&+ \left| \mathbf{E}_x \left[u(X(1))-u(x) ; |X(1)-x| > \frac{d (x)}{2}\right] \right]. 
\label{eqn:taylor.stochastic}
\end{align}
Applying the bound~\eqref{eqn:bound.u} for  partial derivatives  and Assumption~\ref{assumption:increment} 
 to the first two terms we obtain 
for some $\widetilde \varepsilon_0(t)\downarrow 0$, as 
$t\uparrow \infty$, 
\begin{align*}
    &|\nabla u(x)||\mathbf{E}_x[X(1)-x]|+|\nabla u(x)| \left|\mathbf{E}_x \left[|X(1)-x| ; |X(1)-x| > \frac{d (x)}{2}\right]\right| \\ 
    &\hspace{0.5cm}\le
    \widetilde \varepsilon_0(d(x))   |x|^{p-1} \gamma (d(x))d(x)^{-1} + c |x|^{p-1} \left|\mathbf{E}_x \left[|X(1)-x| ; |X(1)-x| > \frac{d (x)}{2}\right]\right| \\ 
    &\hspace{0.5cm} \le
    \widetilde \varepsilon_0(d(x)) \beta(x) +  c |x|^{p-1} \int_{\frac{d(x)}{2}}^{\infty} y d \mathbf{P}_x(|X(1)-x|<y). 
\end{align*}    
Making use of the Karamata theorem and the Assumption~\ref{assumption:x} we obtain  the following bound for the first part of equation~\eqref{eqn:taylor.stochastic},
\begin{align*}
      &|\nabla u(x)||\mathbf{E}_x[X(1)-x]|+|\nabla u(x)| \left|\mathbf{E}_x \left[|X(1)-x| ; |X(1)-x| > \frac{d (x)}{2}\right]\right| \\
      &\hspace{0.5cm}\le \widetilde \varepsilon_0(d(x)) \beta(x) +  c |x|^{p-1} \int_{\frac{d(x)}{2}}^{\infty} \mathbf{P}_x(|X(1)-x|>y) dy \\ 
      &\hspace{0.5cm}\le  \widetilde \varepsilon_0(d(x)) \beta(x) +  c |x|^{p-1} \int_{\frac{d(x)}{2}}^{\infty} y^{-2} \e\left[Y^2; Y>y\right] dy \\ 
      &\hspace{0.5cm}\le  \widetilde \varepsilon_0(d(x)) \beta(x) +  c |x|^{p-1} \int_{\frac{d(x)}{2}}^{\infty} y^{-2} \gamma(y) dy 
      \le \widetilde \varepsilon_1(d(x)) |x|^{p-1} d(x)^{-1} \gamma(d(x))  
\end{align*}
for some $\widetilde \varepsilon_1(t)\downarrow 0$  as 
$t\uparrow\infty$. 

Next we estimate the term with the second order derivative. First we make  use of the harmonicity of $u(x)$ to estimate 
the  first term with the second partial derivatives 
in equation~\eqref{eqn:taylor.stochastic}.
Then it follows from equation~\eqref{ass-incre1} and the Markov inequality,
\begin{align*}
   & \left|\mathbf{E}_x \left[ \frac{1}{2} \sum _{i,j} \frac{\partial ^2 u}{\partial x_i \partial x_j} (X_i(1) -x_i)(X_j(1) -x_j) |; |X(1)-x| \le \frac{d(x)}{2}\right] \right|  \\ 
   & \le
     \left|\frac{1}{2} \mathbf{E}_x \left[ \sum _{i \neq i } u_{{x_i}{x_j}} (X_i(1) -x_i)(X_j(1) -x_j)\right]\right| + \widetilde \varepsilon_2(d(x)) |x|^{p-1}d(x)^{-1} \gamma(d(x)) \\ &+\int ^\infty _ {\frac{d(x)}{2}} y^2 d \mathbf{P}_x ( |X(1) -x| <y)  \le
     \widetilde \varepsilon_3(d(x)) |x|^{p-1}d(x)^{-1} \gamma(d(x))
\end{align*}
for some $\widetilde \varepsilon_2(t),\widetilde \varepsilon_3(t)\downarrow 0$
Then we use equations \eqref{eqn:u.main.bound}, \eqref{eqn:u.harmonic.boundary} and~\eqref{eqn:bound.u} to bound  $R_{2,\alpha}(x)$ as follows 
\[
R_{2,\alpha}(x) \le c_\alpha |x|^{p-1} d(x)^{-\alpha-1},\quad \alpha \in (0,1]. 
\]
Using Remark~\ref{rem:slow} 
\begin{align*}
& \mathbf{E}_x \left[|X(1)-x|^{2+\alpha}, |X(1)-x| < \frac{d(x)}{2}\right] \\ 
&\le \int ^{\frac{d(x)}{2}} _{0} (2+\alpha) y^{1+\alpha} \mathbf{P}_x(|X(1)-x| >y) dy \le
\int ^{\frac{d(x)}{2}} _{0} (2+\alpha) y^{1+\alpha} \mathbf{P}(Y >y) dy \\ 
&\le (2+\alpha) \int ^{\frac{d(x)}{2}} _{0} y^{\alpha-1} \mathbf{E}\left[Y^2, Y>y\right] dy \le
c\int ^{\frac{d(x)}{2}} _{0} 
\widetilde \varepsilon_4(y)
y^{\alpha-1} \gamma(y) d y \\ 
& \le  
\widetilde \varepsilon_5(d(x))
d (x)^{\alpha} \gamma(d(x))
\end{align*}
for some slowly varying $\widetilde \varepsilon_4(t),\widetilde \varepsilon_5(t)\downarrow 0$. 

Using equation~\eqref{diff-bound} to bound the last term in equation~\eqref{eqn:taylor.stochastic},
\begin{align*}
    & \left|\mathbf{E}_x \left[u(X(1))- u(x); |X(1)-x| > \frac{d(x)}{2}\right]\right|\\ 
    &\hspace{1cm} \le 
    c|x|^{p-1} \mathbf{E}_x\left[|X(1)-x|; |X(1)-x| > \frac{d(x)}{2}\right] \\ 
    &\hspace{1.5cm}
    + c \mathbf{E}_x \left[ |X(1)-x|^p; |X(1)-x| > \frac{d(x)}{2}\right]  \le
    \widetilde \varepsilon_6(d(x)) |x|^{p-1} d(x)^{-1} \gamma(d(x)) 
\end{align*}
for some $\widetilde \varepsilon_6(t)\downarrow 0$. 
Therefore, it can be concluded that for all $x \in K$,
\begin{equation*}
    \mathbf{E}_x[u(X(1))- u(x)] \le \widetilde \varepsilon(d(x)) |x|^{p-1} d(x)^{-1} \gamma(d(x)). 
\end{equation*}
for some $\widetilde \varepsilon(t)\downarrow 0$ as $t\uparrow\infty$. 
\end{proof}

\section{Construction of harmonic function}\label{sec:harmonic} 
\subsection{Construction of a superharmonic function} 
Construction of a non-negative superharmonic function is is the same as  for  the random walk in~\cite{randomwalkinconesrevisited}. 
Let
\[
U_{\beta} (y)= \int_K G(x,y) \beta(x) dx,\quad y\in K  \] 
and $U_{\beta} (y)=0,y\notin K$. 
Then $U_{\beta}$ satisfies
\begin{equation}\label{eq:laplacian.drift}
    \Delta_\beta U(y) = -\beta(y), \quad y\in K.
\end{equation} 
Following the same proof as in~\cite{randomwalkinconesrevisited}  with modification similar to Lemma~\ref{lemma:f} we can show that the under Assumption~\ref{assumption:k}, $U_\beta(y)$ is finite, and, moreover, for any $y \in K \colon d(y)>R$,
\begin{equation}
\label{lemma:bound.u_beta}
    U_\beta (y) \le \varepsilon (R) u(y),
\end{equation}
where $\varepsilon(R)$  is monotone decreasing in $R$ to $0$. 
This bound is obtained by  dividing the region of integration into four parts according to estimates  in  Lemma~\ref{lem:green.bound0}.

Similarly to Lemma~11 in~\cite{randomwalkinconesrevisited} 
we can prove following inequality for $U_\beta$.  
\begin{lemma}\label{lem:u.beta-diff}
  Assume that equation~\eqref{eqn:u.harmonic.boundary} holds. 
  Let $x\in K$. Then, 
  \begin{equation}
    \label{diff-bound.ubeta}
    \left|U_\beta(x+y)-U_\beta(x)\right|\le  C|y|\left(|x|^{p-1}+|y|^{p-1}\right)
  \end{equation}
and, for $|y|\le |x|/2$,
\begin{equation}
    \label{diff-bound1.ubeta}
    |U_\beta(x+y)-U_\beta(x)|\le  C|y||x|^{p-1}.
  \end{equation}
For $p<1$ and $x\in K$, 
\begin{equation}
  \label{diff-bound2.ubeta}
  \left|U_\beta(x+y)-U_\beta(x)\right|\le  C|y|^{p}. 
\end{equation}
 \end{lemma}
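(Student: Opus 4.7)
The plan is to follow the three-step template of Lemma~\ref{lem:u-diff}, now for the function $U_\beta$ which satisfies $\Delta U_\beta=-\beta$ in $K$ by~\eqref{eq:laplacian.drift}. The two key ingredients are the pointwise bound $U_\beta\le C u$ from~\eqref{lemma:bound.u_beta}, combined with~\eqref{eqn:u.harmonic.boundary} giving $U_\beta(z)\le C|z|^{p-1}d(z)$, and the interior smoothness of $U_\beta$ as a Newtonian potential.

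I would first prove the sharp local estimate~\eqref{diff-bound1.ubeta} in the regime $|y|\le|x|/2$; the other two bounds then follow by a dichotomy. The natural approach is to establish a gradient bound $|\nabla U_\beta(z)|\le C|z|^{p-1}$ uniformly for $z\in K$ and integrate along the segment from $x$ to $x+y$, on which $|z|\asymp|x|$ throughout. The gradient bound itself comes from the standard interior estimate for the Poisson equation applied on the ball $B(z,r)$ with $r=d(z)/4$:
\[
|\nabla U_\beta(z)|\le \frac{C}{r}\sup_{B(z,r)}U_\beta+Cr\sup_{B(z,r)}\beta,
\]
and plugging in $U_\beta\le C|\cdot|^{p-1}d(\cdot)$ and the definition~\eqref{defn.beta} of $\beta$ (using boundedness of $\gamma$) makes both summands of order $|z|^{p-1}$.

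With~\eqref{diff-bound1.ubeta} in hand, the complementary regime $|y|>|x|/2$ is handled crudely: the triangle inequality and the bound $U_\beta(w)\le C|w|^p$ give
\[
|U_\beta(x+y)-U_\beta(x)|\le U_\beta(x+y)+U_\beta(x)\le C\bigl(|x+y|^p+|x|^p\bigr),
\]
which, using $|x|\le 2|y|$ and $|x+y|\le 3|y|$, is dominated by $C|y|\bigl(|x|^{p-1}+|y|^{p-1}\bigr)$, proving~\eqref{diff-bound.ubeta}, and — when $p<1$ — by $C|y|^p$ from subadditivity of $t\mapsto t^p$, proving~\eqref{diff-bound2.ubeta}. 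In the regime $|y|\le|x|/2$ the inequality~\eqref{diff-bound1.ubeta} already yields both~\eqref{diff-bound.ubeta} and~\eqref{diff-bound2.ubeta} directly, since $|y||x|^{p-1}\le|y|(|x|^{p-1}+|y|^{p-1})$ and, when $p<1$, $|y||x|^{p-1}\le|y|^{p}$.

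The main technical obstacle is controlling the gradient up to the boundary: the interior estimate only uses $r\le d(z)$, so the factor $r^{-1}$ would endanger the bound $C|z|^{p-1}$ unless one has the boundary decay $U_\beta\le C|\cdot|^{p-1}d(\cdot)$ all the way to $\partial K$ — obtained by reworking the region-by-region estimate behind~\eqref{lemma:bound.u_beta} using Lemma~\ref{lem:green.bound0}. A secondary subtlety, under the star-like hypothesis of~\ref{assumption:k}, is that the segment from $x$ to $x+y$ need not lie in $K$; this is handled by extending $U_\beta$ by zero outside $K$ and using that on such a boundary-crossing segment both endpoint values of $U_\beta$ are already controlled by the boundary-decay bound, which again delivers~\eqref{diff-bound1.ubeta}. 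These issues are resolved exactly as in Lemma~11 of~\cite{randomwalkinconesrevisited}.
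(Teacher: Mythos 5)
Your proposal is correct and matches the paper's approach, which consists of deferring to the proof of \cite[Lemma~2.3]{DW19} together with the first-derivative estimates for $U_\beta$ from~\cite{randomwalkinconesrevisited}. You fill in exactly those ingredients: the gradient bound $|\nabla U_\beta(z)|\le C|z|^{p-1}$ from the interior Poisson estimate, followed by integration along the segment (split at boundary crossings, where $U_\beta$ vanishes) and the crude dichotomy for $|y|>|x|/2$.
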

 This  proof follows closely the proof of  \cite[Lemma~2.3]{DW19}.  
 To establish the Lemma we make use of 
 the estimates for the first derivative of 
 $U_\beta$ from~\cite{randomwalkinconesrevisited}.

Exactly as in~\cite{randomwalkinconesrevisited}  
we define the error term for $U_\beta(y)$ by putting 
\[ 
   f_\beta (x) =\mathbf{E}_x [U_\beta (X(1)) ] - U_\beta (x).  
\] 
\begin{lemma}
For any $\varepsilon >0$, there exists $R>0$ such that for all $x \in K\colon  d(x) >R$, 
\begin{equation}
    \left|f _\beta(x) + \frac{1}{2} \beta(x)\right| \le \varepsilon \beta(x)
\end{equation}
\label{lemma:f_beta}
\end{lemma}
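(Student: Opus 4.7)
The plan is to follow the template of the proof of Lemma~\ref{lemma:f} for $f$, but now use the Poisson identity $\Delta U_\beta = -\beta$ from \eqref{eq:laplacian.drift} so that the second-order term in the Taylor expansion contributes the leading $-\tfrac12\beta(x)$, rather than vanishing as in the harmonic case. The starting point is the usual split
\[
f_\beta(x) = \mathbf{E}_x\!\left[U_\beta(X(1))-U_\beta(x);\,|X(1)-x|\le \tfrac{d(x)}{2}\right] + \mathbf{E}_x\!\left[U_\beta(X(1))-U_\beta(x);\,|X(1)-x|> \tfrac{d(x)}{2}\right].
\]

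On the small-jump event I would Taylor expand $U_\beta$ to second order with a H\"older remainder, exactly as in \eqref{eqn:taylor}. The first-order term $\nabla U_\beta(x)\cdot \mathbf{E}_x[X(1)-x]$ is controlled by combining the gradient bound $|\nabla U_\beta(x)|\le C|x|^{p-1}$ (available from \cite{randomwalkinconesrevisited} and implicit in Lemma~\ref{lem:u.beta-diff}) with the mean estimate $|\mathbf{E}_x[X(1)-x]|=o(\gamma(d(x))/d(x))$ from Assumption~\ref{assumption:increment}, so this piece is $o(\beta(x))$. For the second-order term I split the covariances of $X(1)-x$ into the identity plus an error: by (M1) the off-diagonal entries and the diagonal deviations from $1$ are $o(\gamma(d(x)))$, and with the Hessian bound $|\partial^2_{ij}U_\beta(x)|\le C|x|^{p-1}d(x)^{-1}$ this error contributes $o(\beta(x))$; the remaining main term is $\tfrac12\sum_i \partial^2_{ii}U_\beta(x)=\tfrac12\Delta U_\beta(x)=-\tfrac12\beta(x)$ by \eqref{eq:laplacian.drift}. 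The H\"older remainder $R^{U_\beta}_{2,\alpha}(x)|X(1)-x|^{2+\alpha}$ is handled verbatim as in Lemma~\ref{lemma:f}: the seminorm satisfies $R^{U_\beta}_{2,\alpha}(x)\le C|x|^{p-1}d(x)^{-1-\alpha}$, and the truncated $(2{+}\alpha)$-moment of the increment is $o(d(x)^\alpha \gamma(d(x)))$ by Assumption~\ref{assumption:x} and Remark~\ref{rem:slow}, again giving $o(\beta(x))$.

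The large-jump term is treated by invoking the increment estimate \eqref{diff-bound.ubeta} of Lemma~\ref{lem:u.beta-diff}: $|U_\beta(x+y)-U_\beta(x)|\le C|y|(|x|^{p-1}+|y|^{p-1})$. The truncated moments $\mathbf{E}_x[|X(1)-x|;|X(1)-x|>d(x)/2]$ and $\mathbf{E}_x[|X(1)-x|^p;|X(1)-x|>d(x)/2]$ are controlled exactly as in the last two displays of the proof of Lemma~\ref{lemma:f}, producing $o(d(x)^{-1}\gamma(d(x)))|x|^{p-1}=o(\beta(x))$. Collecting everything yields $f_\beta(x)+\tfrac12\beta(x)=o(\beta(x))$ as $d(x)\to\infty$, which is equivalent to the claimed bound with $\varepsilon$ arbitrary once $R$ is taken large enough.

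The main obstacle, and the one place where we cannot simply transcribe the argument from Lemma~\ref{lemma:f}, is that the whole computation relies on pointwise bounds for the first and second derivatives of $U_\beta$ of the form $|\partial^\alpha U_\beta(x)|\le C|x|^{p-1}d(x)^{-|\alpha|+1}$ together with a H\"older control at scale $d(x)/2$. These are Schauder-type interior estimates for the Poisson equation $\Delta U_\beta=-\beta$ in the cone and are not proved again here; we rely on the corresponding estimates already established in \cite{randomwalkinconesrevisited}, which is why the proof can follow the earlier template so closely once they are quoted.
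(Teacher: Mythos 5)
Your proposal matches the paper's (omitted) argument: the paper explicitly states that the proof is practically the same as in the random walk case of~\cite{randomwalkinconesrevisited} with the modifications demonstrated in Lemma~\ref{lemma:f}, and your outline---splitting into small and large jumps, Taylor expanding $U_\beta$ with a H\"older remainder, isolating $\tfrac12\Delta U_\beta=-\tfrac12\beta$ via~\eqref{eq:laplacian.drift} and (M1), and controlling the tail via Lemma~\ref{lem:u.beta-diff}---is precisely how that transfer works. You also correctly flag the one external input, the Schauder-type interior derivative bounds for $U_\beta$, which are indeed taken from~\cite{randomwalkinconesrevisited}.
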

Proofs of equation \eqref{lemma:bound.u_beta} and Lemma \ref{lemma:f_beta},  
are practically the same as in the random walk case~\cite{randomwalkinconesrevisited} and hence omitted.  
Required modification have been demonstrated in  Lemma~\ref{lemma:f}.  

Once we have bounds for errors term $f_\beta(x)$ we can proceed 
to the construction of a positive supermartingale. 
The construction is slightly different from~\cite{randomwalkinconesrevisited}. 
Let 
\[
    V_\beta(x) = u(R x_0 +x)+ 3U_\beta (R x_0 +x).   
\]
\begin{lemma}\label{lemma:supermartingale}
There exists $R>0$ such that 
$V_\beta(x)$ is a non-negative superharmonic function 
in $K$, that is $(V_\beta(X(n))\ind{\tau>n})_{n\ge 0}$ is  
a non-negative supermartingale. 
Moreover, 
\begin{equation}\label{sum.beta}
    \mathbf{E}_x \left[\sum^{\tau-1}_{k=0} \beta(Rx_0 +X(k))\right] \le 2 V_\beta(x)  
\end{equation}
and  there exists a function $\varepsilon(R)\to 0$ such that 
\[
    \mathbf{E}_x \left[\sum^{\tau-1}_{k=1} |f (X(k)+Rx_0)|\right] 
    \le     \varepsilon (R) u(x+Rx_0). 
\]
\end{lemma}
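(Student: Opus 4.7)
The strategy is to establish the one-step drift inequality
\[
\e_x\bigl[V_\beta(X(1))\bigr] - V_\beta(x) \le -\tfrac{1}{2}\beta(x+Rx_0), \qquad x \in K,
\]
for $R$ sufficiently large, from which all three claims follow. Non-negativity of $V_\beta$ is immediate since $u \ge 0$ on $K$ and $U_\beta \ge 0$; combined with the drift inequality and the trivial bound $\e_x[V_\beta(X(1))\ind{\tau>1}] \le \e_x[V_\beta(X(1))]$, the Markov property then yields the supermartingale property. To prove the drift inequality, I would split
\[
\e_x[V_\beta(X(1))] - V_\beta(x) = \widetilde f(x) + 3\widetilde f_\beta(x),
\]
where $\widetilde f(x) := \e_x[u(X(1)+Rx_0)] - u(x+Rx_0)$ and $\widetilde f_\beta(x) := \e_x[U_\beta(X(1)+Rx_0)] - U_\beta(x+Rx_0)$ are the shifted analogues of $f$ and $f_\beta$. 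The proofs of Lemma \ref{lemma:f} and Lemma \ref{lemma:f_beta} may be repeated essentially verbatim with the Taylor anchor moved from $x$ to $y:=x+Rx_0$, using the derivative bounds \eqref{eqn:bound.u} and \eqref{diff-bound} for $u$ and their $U_\beta$-counterparts from Lemma \ref{lem:u.beta-diff} at the anchor $y$. Since $x_0$ is a fixed interior point, $d(y)\ge Rd(x_0)$ uniformly in $x\in K$, so every error coefficient can be made uniformly small in $R$. This gives $|\widetilde f(x)| \le \varepsilon_1(R)\beta(y)$ and $\widetilde f_\beta(x) \le -\tfrac12\beta(y) + \varepsilon_2(R)\beta(y)$ with $\varepsilon_i(R)\to 0$, and the drift inequality follows for $R$ large.

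The sum bound \eqref{sum.beta} then drops out by iterating: $V_\beta(X(n\wedge\tau)) + \tfrac{1}{2}\sum_{k=0}^{n\wedge\tau-1}\beta(X(k)+Rx_0)$ is a non-negative supermartingale, so monotone convergence as $n\to\infty$ gives $\tfrac{1}{2}\e_x\bigl[\sum_{k=0}^{\tau-1}\beta(X(k)+Rx_0)\bigr] \le V_\beta(x)$. For the final inequality I would apply Lemma \ref{lemma:f} pointwise at $y=X(k)+Rx_0$: for $k<\tau$ one has $X(k)\in K$, hence $d(X(k)+Rx_0)\ge Rd(x_0)$, and thus $|f(X(k)+Rx_0)| \le \widetilde\varepsilon(Rd(x_0))\,\beta(X(k)+Rx_0)$. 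Summing and substituting \eqref{sum.beta} together with the bound $V_\beta(x)\le (1+3\varepsilon(R))u(x+Rx_0)$ coming from \eqref{lemma:bound.u_beta} yields the desired estimate with $\varepsilon(R):=2\widetilde\varepsilon(Rd(x_0))(1+3\varepsilon(R))\to 0$.

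The main obstacle is the careful transfer of the Taylor-expansion estimates of Lemmas \ref{lemma:f}--\ref{lemma:f_beta} to the shifted anchor, in particular verifying uniformity in $x\in K$ when $x$ is close to $\partial K$. The derivatives of $u$ and $U_\beta$ at $y=x+Rx_0$ are controlled uniformly via $d(y)\ge Rd(x_0)$, but the moment estimates of the increment $X(1)-x$ are governed by $d(x)$ and Assumption \ref{assumption:x}, so one has to balance the small prefactor $|\nabla u(y)|/|y|^{p-1}\lesssim 1/(Rd(x_0))$ against the uniform tail bound on $Y$ supplied by (SM) in order to absorb the first-order drift term into $\varepsilon(R)\beta(y)$. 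Under the stronger Assumption \ref{ass-incre2} used for Theorem \ref{thm:tau} the first-order term vanishes outright, which is why the argument is cleaner in that regime, but the general case still goes through because every remaining contribution is majorised by the right multiple of $\beta(y)$ after an application of Karamata together with Remark \ref{rem:slow}.
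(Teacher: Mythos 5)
Your proposal follows the same route as the paper: obtain the one-step drift inequality $\e_x[V_\beta(X(1))]-V_\beta(x)\le -\tfrac12\beta(x+Rx_0)$ from Lemmas~\ref{lemma:f} and~\ref{lemma:f_beta}, deduce the supermartingale property, sum the drift terms to get~\eqref{sum.beta}, and then bound $\e_x\bigl[\sum_{k<\tau}|f(X(k)+Rx_0)|\bigr]$ by applying Lemma~\ref{lemma:f} pointwise (where $d(X(k)+Rx_0)\ge Rd(x_0)$) together with~\eqref{sum.beta} and~\eqref{lemma:bound.u_beta}. Your explicit introduction of the shifted error functions $\widetilde f,\widetilde f_\beta$ (Taylor anchor at $x+Rx_0$, increment law governed by the chain at $x$) is in fact a useful clarification: the paper writes $f_\beta(Rx_0+x)$ and invokes Lemma~\ref{lemma:f_beta}, which is literally the error of the chain started at $Rx_0+x$; for a random walk the two objects coincide, but for a genuine Markov chain they do not, and the quantity that actually enters the drift is your $\widetilde f_\beta(x)$, for which the proofs of Lemmas~\ref{lemma:f}--\ref{lemma:f_beta} must indeed be rerun at the shifted anchor as you describe. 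One small slip in your concluding discussion: Assumption~\ref{ass-incre2} imposes no condition on the first moment, so it does not make the first-order Taylor term ``vanish outright''; in Lemma~\ref{lemma:supermartingale} that term is controlled by Assumption~\ref{assumption:increment}, which is the hypothesis in force here.
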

\begin{proof}
Note that by Lemma~\ref{lemma:f_beta} with  
$\varepsilon=\frac{1}{4}$ and a 
sufficiently large $R$, 

\[
\e_x[V_\beta(X(1))-V_\beta(x)]
\le 
6f_{\beta}(Rx_0+x)+
\beta(Rx_0+x)  
\le 
-\frac{\beta(Rx_0+x)}{2}.
\]
Then we have,
\begin{align*}
    &\mathbf{E}_x \left[ V_\beta (X(n)) \ind{\tau > n}-V_\beta (X(n-1)) \ind{\tau > n-1} \mid \mathcal{F}_{n-1}\right] 
    \\
     & \le \mathbf{E}_x[ V_\beta (X(n)) -V_\beta (X(n-1))  \mid \mathcal{F}_{n-1}] \ind{\tau > n-1} \\ 
     &\le -\frac 12 \beta(Rx_0+X(n-1)) \ind{\tau > n-1} ,
\end{align*}
proving the supermartingale property. 
Note also 
\begin{align*}
    0\le \mathbf{E}_x \left[ V_\beta (X(n)) \ind{\tau > n}\right] 
    \le V_\beta(x) - \frac 12 \sum_{k=0}^{n-1} 
    \beta(Rx_0+X(k)) \ind{\tau > k}. 
\end{align*}    
Hence, 
\[
    \sum_{k=0}^{n-1} 
    \beta(Rx_0+X(k)) \ind{\tau > k} 
    \le 2 V_\beta(x). 
\]
Letting $n\to\infty$ we obtain~\eqref{sum.beta}.  

Finally, recall that 
 $0 \le |f(x)| \le 
 \widetilde \varepsilon(d(x))\beta(x)$ 
  by Lemma~\ref{lemma:f}.  
Then, by  equation~\eqref{lemma:bound.u_beta}, 
\begin{align*}
\mathbf{E}_x \left[\sum^{\tau-1}_{k=1} |f (X(k)+Rx_0)|\right] 
    &\le 
    \widetilde \varepsilon(d(CR))
    \mathbf{E}_x \left[\sum^{\tau-1}_{k=1} \beta (X(k)+Rx_0)\right] \\
    & \le \widetilde  \varepsilon(d(CR)) (2u(Rx_0+x)+6U_\beta (x+Rx_0))  
    %\le \left(u(Rx_0+x)+ \varepsilon u(x+Rx_0)\right)
    \\& \le \varepsilon (R) u(x+Rx_0). 
\end{align*}
\end{proof}
\subsection{Construction of the harmonic function}
We are ready to construction the harmonic function. 
\begin{lemma}\label{lem:harmonic} 
Function $V$ in~\eqref{eq:harmonic} is well-defined and is equal to 
\[
V(x) = 
u(Rx_0+x)-\mathbf{E}_x[u(Rx_0+X(\tau)]+\mathbf{E}_x\left[\sum^{\tau-1}_{k=0} f(Rx_0+X(k)\right]. 
\]
This function is positive and harmonic. 
\end{lemma}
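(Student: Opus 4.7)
The plan is to obtain the identity by running a Dynkin--type decomposition on the stopped process $u(Rx_0+X(n\wedge\tau))$, and then pass to the limit using the integrability afforded by Lemmas~\ref{lemma:f} and~\ref{lemma:supermartingale}. Writing the telescoping identity and separating off the conditional drift $f(Rx_0+X(k))$ (exactly as in the proof of Lemma~\ref{lemma:supermartingale}, where the identity $\e_x[u(Rx_0+X(1))]-u(Rx_0+x)=f(Rx_0+x)$ already underlies the supermartingale argument), I would get
\[
\e_x[u(Rx_0+X(n\wedge\tau))]=u(Rx_0+x)+\e_x\Bigl[\sum_{k=0}^{n\wedge\tau-1}f(Rx_0+X(k))\Bigr].
\]
Splitting the left side according to $\{\tau>n\}$ versus $\{\tau\le n\}$ isolates the quantity we want:
\[
\e_x[u(Rx_0+X(n));\tau>n]=u(Rx_0+x)-\e_x[u(Rx_0+X(\tau));\tau\le n]+\e_x\Bigl[\sum_{k=0}^{n\wedge\tau-1}f(Rx_0+X(k))\Bigr].
\]

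Next I would pass to the limit $n\to\infty$ in each term on the right. Lemma~\ref{lemma:f} gives $|f(y)|\le C\beta(y)$, and Lemma~\ref{lemma:supermartingale} gives $\e_x\bigl[\sum_{k=0}^{\tau-1}\beta(Rx_0+X(k))\bigr]\le 2V_\beta(x)<\infty$; dominated convergence therefore handles the $f$-sum. Since $u\ge 0$, monotone convergence handles $\e_x[u(Rx_0+X(\tau));\tau\le n]\uparrow\e_x[u(Rx_0+X(\tau))]$, and finiteness of this limit is forced by the identity. Because every term on the right converges, the left side does too, proving that $V(x)$ is well-defined and yielding the stated formula. (When identifying $\lim_n\e_x[u(Rx_0+X(n));\tau>n]$ with the unshifted limit $\lim_n\e_x[u(X(n));\tau>n]$ I would invoke the Lipschitz estimate~\eqref{diff-bound1} to see that the shift contributes only a lower-order error.)

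Harmonicity then follows at once from the Markov property: starting the defining limit from time $1$ gives $V(x)=\lim_n\e_x[u(X(n+1));\tau>n+1]=\e_x[V(X(1));\tau>1]$, the interchange of limit and expectation being controlled by the uniform domination $\e_y[u(X(n));\tau>n]\le u(Rx_0+y)+3U_\beta(Rx_0+y)$ available from Lemma~\ref{lemma:supermartingale}. Non-negativity of $V$ is immediate from the representation together with the bound $\bigl|\e_x[\sum_{k=0}^{\tau-1}f(Rx_0+X(k))]\bigr|\le\varepsilon(R)u(Rx_0+x)$; strict positivity is then obtained by choosing $R$ large enough that $u(Rx_0+x)-\e_x[u(Rx_0+X(\tau))]-\varepsilon(R)u(Rx_0+x)>0$ at points sufficiently deep in $K$, and propagating this to all of $K$ via the harmonicity relation itself.

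The step I expect to be most delicate is \emph{strict} positivity of $V$: non-negativity drops out of the representation, but separating the ``overshoot'' $\e_x[u(Rx_0+X(\tau))]$ from $u(Rx_0+x)$ requires judicious use of Lemma~\ref{lemma:supermartingale} together with the freedom to enlarge $R$. Every other component---the telescoping identity, the dominated and monotone convergence arguments, the reconciliation of the shifted and unshifted limits, and the Markov-property derivation of harmonicity---is routine once the preparatory estimates of Section~\ref{sec:harmonic} are in hand.
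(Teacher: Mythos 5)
Your approach is essentially the paper's: a Dynkin/martingale decomposition of $u(Rx_0+X(n\wedge\tau))$, monotone convergence for the overshoot, dominated convergence for the $f$-sum via Lemma~\ref{lemma:supermartingale}, and the Markov property for harmonicity. Two remarks, one substantive.

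The substantive one: the reconciliation of the shifted and unshifted limits is not a ``lower-order error'' that drops out for free, and you cite the wrong inequality. Estimate~\eqref{diff-bound1} is only valid when $|y|\le|x|/2$, i.e.\ when $R\le|X(n)|/2$, which you cannot guarantee. You must use~\eqref{diff-bound}, which gives $|u(Rx_0+X(n))-u(X(n))|\le CR|Rx_0+X(n)|^{p-1}+CR^p$, and the whole point is then to show $\e_x[|Rx_0+X(n)|^{p-1};\tau>n]\to 0$. The paper does this by introducing a threshold sequence $a_n\uparrow\infty$: on $\{d(Rx_0+X(n))\le a_n\}$ one bounds $|Rx_0+X(n)|^{p-1}\le\frac{a_n}{\gamma(a_n)}\beta(Rx_0+X(n))$ and uses $\e_x[\beta(Rx_0+X(n));\tau>n]\to 0$ (a consequence of the $\beta$-sum bound~\eqref{sum.beta}); on the complement one bounds $|Rx_0+X(n)|^{p-1}\le\frac{c}{a_n}u(Rx_0+X(n))$ and uses the already established convergence of $\e_x[u(Rx_0+X(n));\tau>n]$. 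That two-case argument is needed, and a proof should contain it rather than wave at a Lipschitz estimate.

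The minor one: your discussion of strict positivity goes beyond what the paper's proof explicitly records (the paper asserts positivity in the statement but its proof only establishes the formula, the bound $V(x)\le Cu(Rx_0+x)$, and harmonicity). Your sketch -- non-negativity from the representation plus $|\e_x[\sum f]|\le\varepsilon(R)u(Rx_0+x)$, strict positivity deep inside the cone, and propagation via harmonicity -- is the right idea, though the ``propagation'' step as you phrase it implicitly assumes the chain started anywhere in $K$ reaches the region where positivity is already known with positive probability before exiting; making that rigorous would require either an irreducibility-type argument or, more in the spirit of the paper, the asymptotics $V\sim u$ established in Theorem~\ref{thm:harmonic.function}.
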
 
\begin{proof} 
Define a  process $(Z_n)_{n\ge 0}$ as follows,
\[ 
    Z_n = u(Rx_0+X(n \wedge \tau))-\sum^{n \wedge \tau-1} _{k=0} f(Rx_0+X(k)), \quad n\ge 0. \] 
Process $(Z_n)_{n\ge 0}$ is a martingale since 
\begin{align*}
    & \mathbf{E}_x[Z_n- Z_{n-1} |\mathcal{F}_{n-1}] \\
    &\hspace{1cm} = {\rm 1}\{\tau>n-1\} \mathbf{E}_x [u(Rx_0+X(n))-u(Rx_0+X(n-1))|\mathcal{F}_{n-1}]\\&
    \hspace{1.5cm}-{\rm 1}\{\tau>{n-1}\} f(Rx_0+X(n-1)). 
\end{align*}
Then we can  proceed further with the optional stopping  theorem,
    \begin{align*}
     \mathbf{E}_x[Z_0]&=u(Rx_0+x)=\mathbf{E}_x[Z_n] \\ &=\mathbf{E}_x[u(Rx_0+X(n \wedge \tau)]-\mathbf{E}_x\left[\sum^{n\wedge \tau-1}_{k=0} f(Rx_0+X(k))\right] \\ &= \mathbf{E}_x[u(Rx_0+X(n); \tau>n]+\mathbf{E}_x[u(Rx_0+X(\tau); \tau \le  n] \\&-\mathbf{E}_x\left[\sum^{\tau-1}_{k=0} f(Rx_0+X(k));  \tau\le n \right]-\mathbf{E}_x\left[\sum^{n-1}_{k=0} f(Rx_0+X(k)); \tau>n\right].
    \end{align*}
We can now  rearrange the first term as follows, 
\begin{multline}
       \mathbf{E}_x [u(Rx_0+X(n); \tau>n] = u(Rx_0+x)-\mathbf{E}_x[u(Rx_0+X(\tau); \tau \le  n]\\+\mathbf{E}_x\left[\sum^{\tau-1}_{k=0} f(Rx_0+X(k)); \tau\le n \right] +\mathbf{E}_x\left[\sum^{n-1}_{k=0} f(Rx_0+X(k)); \tau>n\right].  
\label{eqn:martingale}
\end{multline}
Since $u(Rx_0+X(\tau))1\{ \tau\le n\}$ is increasing in  $n$, 
the monotone convergence theorem implies 
\[ 
    \lim_{n \to\infty}\mathbf{E}_x[u(Rx_0+X(\tau)); \tau\le n]=\mathbf{E}_x[u(Rx_0+X(\tau))]. 
\] 
Lemma \ref{lemma:supermartingale} allows us to apply the   dominated convergence theorem to  the expectation $\mathbf{E}_x[\sum^{\tau-1}_{k=0} f(Rx_0+X(k));  \tau\le n ]$ to obtain 
\[ 
    \lim _{n \to  \infty} \mathbf{E}_x\left[\sum^{\tau-1}_{k=0} f(Rx_0+X(k)); \tau\le n \right] = \mathbf{E}_x\left[\sum^{\tau-1}_{k=0} f(Rx_0+X(k))\right].  
\] 
Again by the dominated convergence theorem,  the remaining term in equation \eqref{eqn:martingale}
    \begin{align*}
        & \limsup_{n \to \infty} \left|\mathbf{E}_x\left[\sum^{n-1}_{k=0} f(Rx_0+X(k)); \tau>n\right]\right|\\& \le \limsup_{n \to  \infty} \mathbf{E}_x\left[\sum^{n-1}_{k=0} |f(Rx_0+X(k))|; \tau>n\right]\\& \le \limsup _{n \to \infty} \mathbf{E}_x\left[\sum^{\tau-1}_{k=0} |f(Rx_0+X(k))|; \tau>n\right] =0. 
    \end{align*}
As a result, 
    \begin{align*}
      &  \lim_{n \to  \infty} \mathbf{E}_x[u(Rx_0+X(n); \tau>n] \\& = u(Rx_0+x)-\mathbf{E}_x[u(Rx_0+X(\tau)]+\mathbf{E}_x\left[\sum^{\tau-1}_{k=0} f(Rx_0+X(k)\right]. 
    \end{align*}
Next we will prove that the difference $\mathbf{E}_x[u(Rx_0+X(n)); \tau>n]-\mathbf{E}_x[u(X(n)); \tau>n]$ converges to $0$.
According to the equation \eqref{diff-bound},
\begin{align*}
    & |u(Rx_0+X(n))-u(X(n))|  \le CR (|X(n)|^{p-1}+R^{p-1}) \\& \le CR|X(n)|^{p-1}+CR^p \le CR|Rx_0+X(n)|^{p-1}+CR^{p}. 
\end{align*}
By  Lemma \ref{lemma:supermartingale}, 
\[ 
    \mathbf{E}_x[\beta(Rx_0+X(k); \tau>n] \xrightarrow{n \to  \infty} 0. 
\]
Then, there is an increasing sequence $a_n \uparrow \infty$  such that 
\begin{align*}
    & \mathbf{E}_x[|Rx_0+X(n)|^{p-1}; d(Rx_0+X(n)) \le a_n; \tau>n] \\& \le \frac{a_n}{\gamma(a_n)} \mathbf{E}_x[\beta(Rx_0+X(n); \tau>n] \xrightarrow{n \to \infty} 0
\end{align*}
and 
\begin{align*}
       & \mathbf{E}_x[|Rx_0+X(n)|^{p-1}; d(Rx_0+X(n)) > a_n; \tau>n] \\& \le \frac{c}{a_n} \mathbf{E}_x[u(Rx_0+X(n); \tau>n] \xrightarrow{n \to  \infty} 0. 
\end{align*}
Hence, the first statement  can  be obtained as follows, 
\begin{align}
    \nonumber 
    V(x)  &= \lim_{n \to \infty} \mathbf{E}_x[u(X(n)); \tau>n] =\lim_{n \to \infty} \mathbf{E}_x[u(Rx_0+X(n)); \tau>n] \\&=u(Rx_0+x)-\mathbf{E}_x[u(Rx_0+X(\tau))]+\mathbf{E}_x \left[\sum^{\tau-1}_{k=0} f(Rx_0+X(k)\right].
        \label{eqn:harmonic.function}
\end{align}
Then, by Lemma \ref{lemma:supermartingale},
\[ 
    V(x) \le Cu(Rx_0+x). 
\]

The harmonicity of $V(x)$ 
can now be proved as follows. First, 
\[
    V(x) =\mathbf{E}_x[V(X(n)); \tau>n]. 
\] 
Taking account into the Markov property of $X(n)$, we only need to prove the harmonicity  for $n=1$. Then, 
\[
        \mathbf{E}_x[u(X(n+1)); \tau> n+1] = \int_{K} \pr_x(X(1) \in dy; \tau >1)\mathbf{E}_y[u(X(n)); \tau>n].
\]
Similarly to the last step, using equation \eqref{eqn:harmonic.function} with monotone convergence theorem,
    \begin{align*}
       V(x)&= \lim_{n \to \infty} \mathbf{E}_x[u(X(n+1)); \tau>n+1] \\&=\int_{K} \mathbf{P}_x(X(1) \in dy; \tau>1) \lim_{n \to \infty} \mathbf{E}_y[u(X(n)); \tau>n] \\& =\int_{K}\mathbf{P}_x(X(1) \in dy; \tau>1)V(y)=\mathbf{E}_x[V(X(1)); \tau>1]. 
     \end{align*}
Hence, $V(x)$ is harmonic for $X$.

\end{proof}

\section{Upper bounds  for $\pr(\tau_x>n)$ in the case $p\ge 1$} \label{sec:upper} 
%The upper bound of $\pr_x(\tau>n)$ can be derived from it of Brownian Motion following the similar logic in~\cite[Lemma~23]{randomwalkinconesrevisited}. 
%The main difference if that we make use of the Skorokhod coupling with the rate of convergence discussed above instead of the KMT coupling. 

%First note the following  upper bound for the  exit time $\tau^{bm}_x$ [see, \cite[Lemma~22]{randomwalkinconesrevisited}.
%\begin{equation}
%    \label{bound.tau.bm}
%         \pr_x(\tau^{bm }>t) \le C \frac{u(x)}{t^{p/2}},\quad x\in K, 
%\end{equation}
%where,
%\[
%    \tau^{bm} := \inf\{t \ge 0\colon  B(t) \not \in K\}. 
%\]

We need to proceed differently from~\cite{randomwalkinconesrevisited} for the proof of an upper bound 
for  $\pr_x(\tau>n)$. 
The reason for that is that we do not have the 
strong coupling at our disposal anymore. 
We precede the main argument with the following result.
\begin{lemma}
\label{lemma:bound.near.boundary}
Let the Assumptions \ref{assumption:k}, \ref{assumption:x} and \ref{ass-incre2} hold. 
Then, for $p \ge 1$, there exist constants $q>0,c>0$ and $A>0$ such that,
\begin{equation}
    \label{eq.bound.near.boundary}
    \sup_{x\in K: d(x)\le \varepsilon \sqrt n}\pr_x(\tau>n) \le c\varepsilon^{q},\quad n\ge \frac{A}{\varepsilon}.  
\end{equation}
\end{lemma}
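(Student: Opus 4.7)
The plan is to transfer the analogous Brownian exit estimate to $X$ via the functional CLT and martingale inequalities, treating separately the regime where the starting point satisfies $|x|\le M\sqrt n$ and where $|x|> M\sqrt n$, with the threshold $M=M(\varepsilon)$ chosen at the end.

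First I would establish a Brownian benchmark: for standard Brownian motion $B$ in the Lipschitz cone $K$ there exist $q,c>0$ such that $\pr_z(\tau_B > 1) \le c\varepsilon^q$ whenever $d(z)\le\varepsilon$.  For $|z|$ in a bounded region this is a boundary H\"older estimate for the caloric function $z\mapsto\pr_z(\tau_B>1)$, which vanishes on $\partial K$ in the Lipschitz domain $K$.  For large $|z|$ one localizes via Chebyshev and replaces $K\cap B(z,R)$ by an enclosing half-space extracted from the Lipschitz structure of $\partial K$, reducing to a one-dimensional exit estimate with exponent $q=1$.

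For $x$ with $|x|\le M\sqrt n$ I would rescale $Y^{(n)}(t):= X(\lfloor nt\rfloor)/\sqrt n$ and apply the martingale FCLT: Assumption~\ref{ass-incre2} provides the martingale structure and correct covariances, while Assumption~\ref{assumption:x} supplies the uniform integrability of squared increments needed for $Y^{(n)}\Rightarrow B$.  Writing $\{\tau_X>n\}\subset\{Y^{(n)}[0,1]\subset \overline K\}$ and using that BM started in the interior has no atom at exit time $1$, Portmanteau for closed sets yields $\limsup_n \pr_{x_n}(\tau_X>n)\le \pr_z(\tau_B>1)$ along any sequence with $x_n/\sqrt n\to z\in \overline K\cap\overline{B(0,M)}$ and $d(z)\le\varepsilon$.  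A standard compactness/contradiction argument combined with Step~1 then delivers the uniform bound $\pr_x(\tau_X>n)\le c'\varepsilon^q$ once $n$ is sufficiently large.

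For $x$ with $|x|> M\sqrt n$, apply Doob's inequality to the martingale $|X(k)-x|^2-dk$ (a martingale by Assumption~\ref{ass-incre2}) to confine the chain: $\pr_x(\max_{k\le n}|X(k)-x|>R\sqrt n)\le C/R^2$.  Choose $R=M/2$ so that $R\sqrt n\le |x|/2$; inside $B(x,R\sqrt n)$ the Lipschitz property of $\partial K$ produces a half-space $H_x\supset K\cap B(x,R\sqrt n)$ whose bounding hyperplane lies at distance $\asymp d(x)$ from $x$.  Projecting $X$ onto the inward normal of $\partial H_x$ gives a one-dimensional martingale with bounded incremental variance started at distance $\asymp d(x)$ from the boundary; a 1D reflection/Doob estimate bounds its survival probability by $Cd(x)/\sqrt n\le C\varepsilon$.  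Hence $\pr_x(\tau_X>n)\le C\varepsilon + C/M^2$, and taking $M=\varepsilon^{-q/2}$ equalizes the two contributions and yields $\pr_x(\tau_X>n)\le C'\varepsilon^{q'}$ for some $q'>0$.

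The main obstacle lies in the large-$|x|$ step: making the half-space inclusion quantitatively uniform in $x$, and verifying that the projected one-dimensional martingale has incremental variance bounded below by a positive constant.  The condition $n\ge A/\varepsilon$ is used precisely to guarantee that the localization scale $\sqrt n$ is compatible with the Lipschitz structure of $\partial K$ near $x$, and that the FCLT approximation in Step~2 is effective.  The compactness argument in Step~2 is otherwise standard, once one recognizes that limit points on $\partial K$ contribute zero survival probability since Brownian motion started on the boundary of a Lipschitz domain exits immediately.
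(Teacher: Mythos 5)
Your overall strategy (split at a radius threshold, confine the chain by a quadratic martingale, and compare locally to a model problem) parallels the paper's, but there is a genuine gap in the large-$|x|$ step. You claim that ``inside $B(x,R\sqrt n)$ the Lipschitz property of $\partial K$ produces a half-space $H_x\supset K\cap B(x,R\sqrt n)$ whose bounding hyperplane lies at distance $\asymp d(x)$ from $x$.'' This is true for convex cones, but is false for a general Lipschitz star-like cone: near the nearest boundary point $x_0$ the set $K$ need not lie on one side of any hyperplane (take, e.g., $K$ to be the complement of a thin wedge -- then $K\cap B(x,R\sqrt n)$ wraps around $x_0$ and is contained in no half-space whose bounding hyperplane is close to $x_0$). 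So the reduction to a one-dimensional projected martingale breaks down exactly in the case the lemma is designed to cover; the paper's remark after the statement explicitly flags that the convex case is trivial via half-planes and that the whole difficulty is the non-convex case. What a Lipschitz domain does give is an \emph{exterior cone} condition: a small right circular cone $\tilde K(x_0)$ with vertex at $x_0$ lying (locally, after truncation) outside $K$, uniformly over boundary points. The paper exploits this by comparing $X$ not to a one-dimensional half-space problem but to the chain in the very wide cone $\R^d\setminus\tilde K(x_0)$, whose harmonic function has a small homogeneity exponent $p_1<1$; a Markov inequality with this (super)harmonic function $\widetilde V_{x_0}$, applied at the exit from $B(x_0,h\sqrt{\varepsilon n})\cap K$, produces the $\varepsilon^{p_1/2}$ contribution. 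If you want to salvage your half-space idea you must replace ``half-space'' by ``complement of a thin exterior cone'' and argue via a harmonic function for the wide complementary cone rather than a one-dimensional projection.

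On the small-$|x|$ regime your route is genuinely different from the paper's and, while plausible, is heavier than necessary. You invoke a Brownian benchmark, Portmanteau for the closed event $\{Y^{(n)}[0,1]\subset\overline K\}$, regularity of all boundary points of a Lipschitz domain, and a compactness/contradiction argument to obtain uniformity; this works but needs care both with the identification $\pr_z(\tau_B^{\overline K}>1)=\pr_z(\tau_B>1)$ (regularity of $\partial K$) and with how the implicit threshold in $n$ depends on $\varepsilon$. The paper sidesteps the FCLT entirely in this case: it stops $X$ at $\nu_0=\min\{k:|X(k)|>3\sqrt{\varepsilon n}\}$, bounds $\pr_x(\nu_0>n)\le C\varepsilon$ by optional stopping for the martingale $|X(n)|^2-dn$, and bounds $\pr_x(\tau>n,\nu_0\le n)$ by a Markov inequality applied to a superharmonic function $V_0$ built for a slightly larger cone $K_0\supset K$, whose smaller homogeneity exponent $p_0<p$ produces the factor $\varepsilon^{p_0/2}$. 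That argument is more elementary and is where the ``larger/complementary cone'' device of the paper first enters -- the same device you would need in the large-$|x|$ step.
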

\begin{remark}
    For convex cones this bound follows 
    immediately from estimates   
    for $\pr_x(\tau>n)$ for half planes, 
    which in turn follow immediately from the 
    one-dimensional case. 
    The main difficulty in the presented proof is 
    to include the  non-convex case. 
    As the  proof is quite lengthy it might be a good idea 
    to skip it on the first reading by considering convex cones only.  
\end{remark}    

\begin{proof}[Proof of  Lemma~\ref{lemma:bound.near.boundary}] 
Fix $\varepsilon>0$ and let $x\in K$ be such that $d(x)\le \varepsilon\sqrt n$. 
We consider separately two cases: $|x|\le  3\sqrt{\varepsilon n}$ and $|x|>    3\sqrt{\varepsilon n}$.

\underline{Case $x: |x|\le 3\sqrt{\varepsilon n}$}. 
Let
\[
\nu_0:=\min\{k\ge 1\colon |X(k)|>
3\sqrt{\varepsilon n}\}. 
\]
By Assumption~\ref{ass-incre2} the 
process $(|X(n)|^2 -dn )_{n\ge 0}$ is a martingale. 
 Then, by Doob's optional stopping theorem, 
\[
\e_x\left[|X(n \wedge \nu_0 )|^2 -d \min(n,\nu_0)\right] 
=|x|^2. 
\]
Hence, 
\begin{multline*}
    d\e_x[\min(n,\nu_0)]
    \le 
    \e_x[|X(n)|^2;\nu_0>n] 
    +\e_x[|X(\nu_0)|^2;\nu_0\le n]\\ 
    \le 9 \varepsilon n +\e_x[|X(\nu_0)|^2;\nu_0\le n]. 
\end{multline*} 
The second expectation can be bounded  
as follows, 
\begin{multline*} 
\e_x[|X(\nu_0)|^2;\nu_0\le n] 
=
\e_x[|X(\nu_0)|^2;\nu_0\le n, |X(\nu_0)|\le 4\sqrt{\varepsilon n}] \\ 
+
\e_x[|X(\nu_0)|^2;\nu_0\le n, |X(\nu_0)|>4\sqrt{\varepsilon n}] 
\\
\le 16\varepsilon n
+\e_x[|X(\nu_0)|^2;\nu_0\le n, |X(\nu_0)|>4\sqrt{\varepsilon n}]. 
\end{multline*}
Using the elementary bound $(a+b)^2\le 2(a^2+b^2)$ we 
bound the latter expectation  as follows, 
\begin{multline*} 
\e_x[|X(\nu_0)|^2;\nu_0\le n, |X(\nu_0)|>4\sqrt{\varepsilon n}] 
\le 
2\e_x[|X(\nu_0-1)|^2;\nu_0\le n]
\\
+2\e_x[|X(\nu_0)-X(\nu_0-1)|^2, 
|X(\nu_0)|>4\sqrt{\varepsilon n}, \nu_0\le n] 
 \\ 
 \le 18\varepsilon n+2\sum_{k=1}^n 
\e_x[|X(k)-X(k-1)|^2, 
|X(k)|>4\sqrt{\varepsilon n}, \nu_0 = k].
\end{multline*}
Now note 
that 
\begin{multline*}
\sum_{k=1}^n 
\e_x[|X(k)-X(k-1)|^2, 
|X(k)|>4\sqrt{\varepsilon n}, \nu_0 = k] \\ 
\le 
\sum_{k=1}^n 
\e_x[|X(k)-X(k-1)|^2, 
|X(k)-X(k-1)|>\sqrt{\varepsilon n}, \nu_0 > k-1] \\ 
\le 
\e[Y^2; |Y|>\sqrt{\varepsilon n}] 
\sum_{k=1}^n  \pr_x(\nu_0>k-1)
\le 
\e[Y^2; |Y|>\sqrt{\varepsilon n}] 
\e[\nu_0\wedge n]. 
\end{multline*} 
Assuming that $\sqrt{\varepsilon n}>A_0$ 
for a  sufficiently large $A_0$ 
we can ensure that 
$\e[Y^2; |Y|>\sqrt{\varepsilon n}]\le \frac14$. 
Then we obtain,  
\[
d\e_x[\min(n,\nu_0)] 
\le \frac{1}{2}\e_x[\min(n,\nu_0)]
+34\varepsilon n
\]
and hence 
\begin{equation}
\label{p_nu0_1}
\pr_x(\nu_0>n)
\le \frac{\e_x[\min(n,\nu_0)] }{n}
\le 
\frac{34}{d-1/2}\varepsilon
\end{equation}
for $\varepsilon n >A_0^2$ and a sufficiently large 
$A_0$.  

Next let $K_0$ be a cone that contains 
$K$ 
in such a way that 
\begin{equation}\label{eq:dist}
\text{dist} (\mathbb S^{d-1}\cap K_0,\mathbb S^{d-1}\cap K)>0. 
\end{equation}
Assume also that $K_0$ satisfies smoothness  assumptions of the present paper. 
We can further define (or redefine) the Markov chain 
on $K\setminus K_0$ that it will satisfy our assumptions. 
Then, there exists a superharmonic function 
$V_0(x)$ such that 
\[
\e_x[V_0(X(1))]\le V_0(x), \quad x\in K_0. 
\]
Note that for a larger cone the harmonic 
function for the killed Brownian motion 
is homogeneous of order $p_0<p$. 
Hence, it follows from~\eqref{eq:dist} and~\eqref{eqn:u.harmonic.boundary.low}  
that 
there exists a constant $c$ such that 
\begin{equation}\label{eq:v0.below}
V_0(x)\ge c_0u_0(R_0x_0+x)\ge \frac{1}{c}|x|^{p_0}
\end{equation} for 
$x\in K\subset K_0$. 
Here $u_0$ is the harmonic function of Brownian motion killed on exiting $K_0$.

Now on the event $\nu_0\le n$ we have 
$|X_{\nu_0}|>3{\sqrt{\varepsilon n}}$. 
Hence, applying the  Markov inequality 
together with~\eqref{eq:v0.below} we obtain,
\begin{align}
    \nonumber 
        \pr_x(\tau>n, \nu_0 \le n) 
       &\le \frac{c \e_x [V_0(X(\nu_0 \wedge n)); \tau>(\nu_0 \wedge n)]}{(3\sqrt{\varepsilon n})^{p_0}} \\ & 
       \le \frac{c V_0(x)}{(3\sqrt{\varepsilon n})^{p_0}} 
       \le c \varepsilon^{p_0/2}. 
       \label{p_nu0_2}
\end{align}
Combination of~\eqref{p_nu0_1} and~\eqref{p_nu0_2} proves the statement in the first case as 
\begin{equation}
\label{eq.first.case}
\pr_x(\tau>n)
\le     
\pr_x(\nu_0>n)
+\pr_x(\nu_0\le n,\tau>n)
\le c(\varepsilon + \varepsilon^{p_0/2}). 
\end{equation}

\underline{Case $x: |x|>3\sqrt{\varepsilon n}$}. We start with some geometric observations. 
Let $x_0 \in \partial K$ 
be a point at the boundary of the cone 
such that $|x-x_0| = d(x)$. 
Denote the open ball of radius $t$ with centre at $x_0$ as 
$B(x_0, t):= \{x \in \R^d\colon |x-x_0| <  t\}$. 

Since $K$ is a Lipschitz cone, we can always find a   right circular cone $\tilde K$   with a sufficiently small opening angle such that for any point $z_0\in K\cap \{z\colon |z|<1\}$ 
a suitable translation and rotation of  
 $\tilde K$ 
will give a cone, which we denote by 
$\tilde K(z_0)$,  with its vertex at $z_0$ such that $\tilde K_h(z_0):=\tilde K(z_0)\cap B(z_0,h)$ lies outside of the cone $K$ with strictly positive distances between any $2$ points of boundaries 
of $\tilde K_h(z_0)$ and $K$ excluding vertex $z_0$.  
There exists a constant $c$ such that 
by taking $h\le 1$ sufficiently small, 
  we can ensure that 
for all $z_0 \in K\cap B(0,1)$  
we have also the following uniform geometric property: 
\begin{equation}\label{eq.geom.1}
\text{dist}(y, \widetilde K(z_0)\setminus K_h(z_0))\ge c h, 
\quad \text{ for }  y\in B(z_0, 3h)\cap K. 
\end{equation}
By scaling of cones we then obtain 
that for $z_0:|z_0|>1$, 
\begin{equation}\label{eq.geom}
\text{dist}(y, \widetilde K(z_0)\setminus K_h(z_0))\ge c h|z_0|, 
\quad \text{ for }  y\in B(z_0, 3h|z_0| )\cap K. 
\end{equation}

Let
\[
\nu_1:=\min\{k\ge 1\colon X(k)\notin 
B(x_0, h\sqrt{\varepsilon n})\cap K\}. 
\]
Using the  optional stopping theorem 
and Assumption~\ref{assumption:x} 
with the  martingale 
$((X(n)-x_0))^2-dn $, we obtain 
\[
d\e_x[\nu_1\wedge n ]
=\e_x[(X(\nu_1)-x_0)^2;\nu_1\le n]
+ \e_x[(X(n)-x_0)^2;\nu_1>n]
-|x-x_0|^2.
\]
For the second expectation we obtain immediately 
\[
    \e_x[(X(n)-x_0)^2;\nu_1>n] 
    \le h^2 \varepsilon n. 
\]
For the first expectation we use the same argument as above 
\begin{multline*}
    \e_x[|X(\nu_1)-x_0|^2;\nu_1\le n]
    \le 
    4 h^2\varepsilon n
    +
    \e_x[|X(\nu_1)-x_0|^2;\nu_0\le n,
    |X(\nu_1)-x_0|>2h\sqrt {\varepsilon n} ]. 
\end{multline*}
Then, 
\begin{multline*} 
    \e_x[|X(\nu_1)-x_0|^2;\nu_1\le n, 
    |X(\nu_1)-x_0|>2h\sqrt{\varepsilon n}] 
    \le 
    2\e_x[|X(\nu_1-1)-x_0|^2;\nu_1\le n]
    \\
    +2\e_x[|X(\nu_1)-X(\nu_1-1)|^2, 
    |X(\nu_1)-X(\nu_1-1)|>h\sqrt{\varepsilon n}, \nu_1\le n] 
    \le 2h^2
    \varepsilon n \\ 
    +2\sum_{k=1}^n 
    \e_x[|X(k)-X(k-1)|^2, 
    |X(k)-X(k-1)|>h\sqrt{\varepsilon n}, \nu_1 = k]\\
    \le 
    2h^2\varepsilon n
    +\e[Y^2;Y>h\sqrt{\varepsilon n}] 
    \e_x[\nu_1\wedge n].
    \end{multline*}
 Then, similarly we obtain for sufficiently large $n$ that 
\[
\e_x[\nu_1\wedge n] \le \frac{7h^2}{d-1/2}\varepsilon n.
\]
Hence,
\begin{equation}
\label{p_nu1_1}
    \pr_x (\tau>n, \nu_1>n) \le \pr_x(\nu_1>n) \le 
    \frac{\e_x[\nu_1\wedge n]}{n} 
    \le c \varepsilon
\end{equation}

Next consider
\begin{multline*} 
\pr_x(\tau>n,\nu_1\le n)
=  \pr_x(\tau>n,\nu_1\le n,
X(\nu_1) \in B(x_0,2h\sqrt{\varepsilon n}))\\ 
+
\pr_x(\tau>n,\nu_1\le n,
X(\nu_1) \notin B(x_0,2h\sqrt{\varepsilon n})).
\end{multline*}
For the second case we again use the same argument, 
\begin{multline}
    \pr_x(\tau>n,\nu_1\le n,
    X(\nu_1) \notin B(x_0,2h\sqrt{\varepsilon n})).
\le     
\sum_{k=1}^n 
    \pr(
    |X(k)-X(k-1)|>h\sqrt{\varepsilon n}, \nu_1 = k) \\    
\le n \pr(Y>h\sqrt{\varepsilon n}) 
\le n \frac{\e[Y^2 \log Y]}{h^2\varepsilon n \log (h\sqrt{\varepsilon n})} 
\le \varepsilon 
\label{p_nu1_2}
\end{multline}
for  $n$ such that $\log( h\sqrt{\varepsilon n})\ge (h\varepsilon)^{-2}$. 

Now let $\widetilde V_{x_0}$ be 
the positive  harmonic function constructed for $X$ in the cone 
$\R^d\setminus \tilde K_{x_0}$. 
Let $p_1$ be the corresponding 
order of homogeneity the harmonic function  for the Brownian motion in the right circular cone $\widetilde K$.  
Now we are going to use the geometric property~\eqref{eq.geom}. 
Note that $X(\nu_1)\in B(x_0,2h\sqrt{\varepsilon n})\subset 
B(x_0,h|x|),
$ where the set inclusion holds since $|x|>3\sqrt{\varepsilon n}$. 
Hence ${\rm dist} (X(\nu_1), \tilde K_{x_0}\setminus B(x_0,h|x_0|))\ge c|x_0|\ge c_1\sqrt{\varepsilon n}$. 
Also since $X(\nu_1) \notin B(x_0,h\sqrt{\varepsilon n}) $ we have 
${\rm dist} (X(\nu_1), \tilde K_{x_0}\cap B(x_0,h|x_0|))\ge c_2\sqrt{\varepsilon n}$.  
Combining those statements we obtain that 
\[
{\rm dist} (X(\nu_1), \tilde K_{x_0}) \ge c_13\sqrt{\varepsilon n}. 
\]

Then, 
\begin{multline} 
    \pr_x(\tau>n,\nu_1\le n,
    X(\nu_1) \in B(x_0,2\sqrt{\varepsilon n}))\\ 
    \le \frac{\e_x[ \widetilde V_{x_0}(X(\nu_1));\tau>n,\nu_1\le n,
    X(\nu_1) \in B(x_0,2\sqrt{\varepsilon n}))}{(\sqrt{\varepsilon n})^{p_1}}\\
    \le \frac{\widetilde V_{x_0}(x)}{{(\sqrt{\varepsilon n})^{p_1}}}
    \le C \frac{(\varepsilon \sqrt{ n})^{p_1}}{(\sqrt{\varepsilon n})^{p_1}} 
    \le C \varepsilon^{p_1/2}.
    \label{p_nu1_3}
\end{multline}

Combining  the equations  \eqref{eq.first.case}, \eqref{p_nu1_1}, 
\eqref{p_nu1_2} and \eqref{p_nu1_3}
we arrive at the conclusion. 
\end{proof} 

\begin{lemma}
\label{lemma:bound.p.tau}
Let the Assumptions \ref{assumption:k}, \ref{assumption:x} and \ref{ass-incre2} hold. Then, for $p \ge 1$, there exists a constant $C$ such that,
\[ 
    \pr_x(\tau>n) \le C \frac{V_\beta(x)}{n^\frac{p}{2}} \le C \frac{u(x+Rx_0)}{n^\frac{p}{2}}. 
\] 
\end{lemma}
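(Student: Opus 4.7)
The plan is to apply the Markov property at the intermediate time $m=\lfloor n/2\rfloor$ and split the event $\{\tau>n\}$ according to whether $X(m)$ is close to $\partial K$. With
\[
  A_1=\{\tau>m,\ d(X(m))\le \varepsilon\sqrt{n-m}\}, \qquad A_2=\{\tau>m,\ d(X(m))>\varepsilon\sqrt{n-m}\},
\]
we have
\[
  \pr_x(\tau>n) = \e_x[\pr_{X(m)}(\tau>n-m);A_1] + \e_x[\pr_{X(m)}(\tau>n-m);A_2].
\]
On $A_1$, Lemma~\ref{lemma:bound.near.boundary} applied at the starting point $X(m)$ yields $\pr_{X(m)}(\tau>n-m)\le c\varepsilon^q$ (for $n$ large enough that $n-m\ge A/\varepsilon$), whence the first term is at most $c\varepsilon^q\,\pr_x(\tau>m)$.

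For the contribution of $A_2$, I will use the supermartingale $V_\beta$ from Lemma~\ref{lemma:supermartingale}. Since $V_\beta(y)\ge u(Rx_0+y)$ and Assumption~\ref{assumption:k} provides the lower bound $u(z)\ge C^{-1}|z|^{p-1}d(z)$, combined with the $1$-Lipschitz property of $d(\cdot)$ and $|X(m)|\ge d(X(m))>\varepsilon\sqrt{n-m}$, a direct computation gives $V_\beta(X(m))\ge c\varepsilon^p n^{p/2}$ on $A_2$ for all sufficiently large $n$. Bounding $\pr_{X(m)}(\tau>n-m)\le 1$ and applying Markov's inequality together with the superharmonicity $\e_x[V_\beta(X(m));\tau>m]\le V_\beta(x)$ from Lemma~\ref{lemma:supermartingale} yields
\[
  \e_x[\pr_{X(m)}(\tau>n-m);A_2] \le \frac{\e_x[V_\beta(X(m));\tau>m]}{c\,\varepsilon^p n^{p/2}} \le \frac{V_\beta(x)}{c\,\varepsilon^p n^{p/2}}.
\]

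Setting $g(n):=\sup_{x\in K} n^{p/2}\pr_x(\tau>n)/V_\beta(x)$ and combining the two estimates produces a self-improving recursion of the form
\[
  g(n)\le c\,\varepsilon^q\, 2^{p/2}\, g(\lfloor n/2\rfloor) + C\varepsilon^{-p}.
\]
Choose $\varepsilon$ small enough that $c\,\varepsilon^q\, 2^{p/2}\le 1/2$, fix it, and iterate down to a large but fixed $n_0$. At the base of the induction $g(n_0)$ is controlled by the trivial bound $\pr_x(\tau>n_0)\le 1$ together with $V_\beta(x)\ge u(Rx_0)>0$. This yields $g(n)\le C$ uniformly in $n$, i.e.\ $\pr_x(\tau>n)\le CV_\beta(x)/n^{p/2}$, and the final inequality $V_\beta(x)\le Cu(Rx_0+x)$ follows directly from the estimate $U_\beta(Rx_0+x)\le \varepsilon(R)u(Rx_0+x)$ recorded in~\eqref{lemma:bound.u_beta}.

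The main obstacle is engineering the contraction: one needs to know that exit from a starting point within distance $\varepsilon\sqrt n$ of the boundary is suppressed by a factor that goes to zero with $\varepsilon$, which is precisely the content of Lemma~\ref{lemma:bound.near.boundary}. This is what allows the otherwise circular application of Markov's inequality to $V_\beta$ to close up, and it replaces the role played by the KMT coupling in~\cite{randomwalkinconesrevisited}. A minor but necessary technical point in the lower bound on $V_\beta(X(m))$ is to check that $Rx_0+X(m)\in K$ (which uses the star-like property in Assumption~\ref{assumption:k}) and that $d(Rx_0+X(m))\ge d(X(m))-R$, so that the shift by $Rx_0$ costs only a constant in the distance to the boundary.
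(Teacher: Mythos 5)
Your argument is correct and follows the same route as the paper: split at the midpoint on the event $\{d(X(m))\le\varepsilon\sqrt{n}\}$ vs. its complement, use Lemma~\ref{lemma:bound.near.boundary} to get a small factor $c\varepsilon^q$ on the near-boundary piece, use the lower bound~\eqref{eqn:u.harmonic.boundary.low} plus Markov's inequality and the supermartingale property of $V_\beta$ on the far piece, and close the resulting recursion by choosing $\varepsilon$ small enough that $c\varepsilon^q 2^{p/2}<1$. Your bookkeeping (the $\varepsilon^{-p}$ constant and the explicit factor $2^{p/2}$ in the contraction) is in fact slightly more careful than the paper's write-up.
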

\begin{proof}
Fix an $\varepsilon >0$,
\begin{multline*}
    \pr_x (\tau>n) =\pr_x \left(\tau>n, d(X(n/2)) \ge \varepsilon n^{1/2}\right)+\pr_x \left(\tau>n, d(X(n/2)) < \varepsilon n^{1/2}\right)
\end{multline*}
The first term can be bounded by the Markov inequality and Theorem \ref{thm:harmonic.function},
\begin{equation}
\label{eqn:p1}
    \begin{split}
        & \pr_x \left(\tau>n, d(X(n/2)) \ge \varepsilon n^{1/2}\right) \le c \e_x \left[ \frac{u(X(n/2)), \tau>n/2}{ c (\varepsilon n^{1/2})^p}\right] \\ & \le \frac{c}{\varepsilon^{\frac{p}{2}}} \frac{\e_x [V_\beta(X(n/2)), \tau>n/2]}{n^{p/2}} \le \frac{c V_\beta(x)}{n^{p/2}}
    \end{split}
\end{equation}
Then we rewrite the second term,
\begin{equation}
\label{eqn:p2}
\begin{split}
 &\pr_x\left(\tau>n, d(X(n))< \varepsilon n^{1/2}\right) \\& = \int_{d(y) < \varepsilon\sqrt{n}} \pr_x \left(\tau \ge n/2, X(n/2) \in dy\right) \pr_y \left(\tau> n/2\right)
 \end{split}
\end{equation}
Use Lemma \ref{lemma:bound.near.boundary}, equation \eqref{eqn:p2} can be written as,
\[
    \pr_x\left(\tau>n, d(X(n))< \varepsilon n^{1/2}\right) \le f(\varepsilon) \pr_x(\tau>n/2)
\]
With $f(\varepsilon):= c \varepsilon ^{q}$,
combine this with equation \eqref{eqn:p1},
\[
\pr_x(\tau>n) = \frac{c}{\varepsilon^{p/2}}\frac{V_\beta(x)}{n^{p/2}}+f(\varepsilon) \pr_x\left(\tau>\frac{n}{2}\right)
\]
Iterating this estimate $N$ times, we obtain,
\[
\pr_x(\tau>n)\le c \sum^{N-1}_{j=0} \frac{f(\varepsilon)^j}{\varepsilon^{p/2}}\frac{V_\beta(x)}{(n/2)^{p/2}} + f(\varepsilon)^N\pr\left(\tau> n/2^N\right)
\]
Now for appropiate $c_1$, sufficiently small $\varepsilon$ and  $N\sim c_1\log(n)$ 
we obtain 
\[
\pr_x \left(\tau>n\right) \le c\frac{V_\beta(x)}{n^{p/2}}+c\frac{1}{n^{p/2}}
\]
\end{proof}

\begin{corollary}
    \label{lemma:bound.p.tau.1}
    Let the Assumptions \ref{assumption:k} and \ref{assumption:x} hold. Then, for $p \ge 1$, 
    there exists a constant $C$ such that for \(x\in K\), 
    \[ 
        \pr_x(\tau>n) \le C \frac{d(Rx_0+x)}{\sqrt n}. 
    \] 
    \end{corollary}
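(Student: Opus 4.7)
The plan is to derive the bound from Lemma~\ref{lemma:bound.p.tau} by inserting the geometric upper bound $u(y)\le C|y|^{p-1}d(y)$ supplied by Assumption~\ref{assumption:k}. This yields
\[
\pr_x(\tau>n) \le C\Bigl(\frac{|x+Rx_0|}{\sqrt{n}}\Bigr)^{p-1}\frac{d(x+Rx_0)}{\sqrt{n}},
\]
so when $|x+Rx_0|\le\sqrt{n}$ the prefactor is at most $1$ (as $p\ge 1$) and the corollary follows at once. In particular the case $p=1$ needs no further argument.

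For $p>1$ and $|x+Rx_0|>\sqrt{n}$ I would split further on the size of $d(x+Rx_0)$. When $d(x+Rx_0)\ge c\sqrt{n}$ for some fixed $c>0$, the trivial bound $\pr_x(\tau>n)\le 1\le c^{-1}d(x+Rx_0)/\sqrt{n}$ already works. The genuinely non-trivial ``thin-strip'' case is $|x+Rx_0|>\sqrt{n}$ together with $d(x+Rx_0)<c\sqrt{n}$: the chain starts far from the origin but close to the boundary. Here I would apply Lemma~\ref{lemma:bound.near.boundary} with $\varepsilon=d(x+Rx_0)/\sqrt{n}$, obtaining $\pr_x(\tau>n)\le c\varepsilon^q$. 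Inspecting that proof, $q=\min(1,p_0/2,p_1/2)$, and by taking the surrounding cone $K_0$ close to $K$ and the circular cone $\widetilde K$ sufficiently sharp one can achieve $q=1$ whenever $p>2$, which yields the desired linear bound in $\varepsilon$.

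The main obstacle is therefore the range $1<p\le 2$, where $p_0<p\le 2$ forces $q<1$ regardless of the auxiliary cones. For this range I would supplement the argument with a local one-dimensional projection at the nearest boundary point $x_*\in\partial K$ realising $d(x+Rx_0)$. Letting $v$ denote the inward unit normal there and $\ell(y)=\langle v,y-x_*\rangle$, we have $\ell(x)\asymp d(x+Rx_0)$, and under Assumption~\ref{ass-incre2} the sequence $(\ell(X(k)))_k$ is a one-dimensional martingale with unit per-step variance. On the event that $X$ stays inside $B(x_*,A\sqrt{n})$ the Lipschitz regularity of $\partial K$ from Assumption~\ref{assumption:k} makes the boundary there approximately flat, so $\{\tau>n\}$ forces $\ell(X(k))>0$ for all $k\le n$, and the classical half-line first-passage bound gives this probability $\le Cd(x+Rx_0)/\sqrt{n}$. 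The probability of exiting that ball in $n$ steps is controlled by Doob's maximal inequality applied to the $L^2$ martingale $|X(k)-x_*|^2-dk$ from Assumption~\ref{ass-incre2}, exactly as done inside the proof of Lemma~\ref{lemma:bound.near.boundary}.
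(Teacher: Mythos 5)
Your treatment of $p=1$ (and, more generally, of the region $|x+Rx_0|\le\sqrt{n}$, where the prefactor $(|x+Rx_0|/\sqrt{n})^{p-1}$ is at most $1$) is correct and is a nice observation not spelled out in the paper. However, your plan for the genuinely hard regime $p>1$, $|x+Rx_0|>\sqrt n$, $d(x+Rx_0)<c\sqrt n$ has two real gaps, and it diverges from the argument the paper actually uses.

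First, the appeal to Lemma~\ref{lemma:bound.near.boundary} cannot produce $q=1$. The exponent $p_1$ appearing in the bound $\varepsilon^{p_1/2}$ in that proof is the homogeneity order of the positive harmonic function for Brownian motion in the \emph{complement} $\R^d\setminus\widetilde K$ (this is what $\widetilde V_{x_0}$ is built for, and what makes the Markov-inequality step in~\eqref{p_nu1_3} go through). Making the exterior circular cone $\widetilde K$ sharper makes its complement \emph{wider}, which drives $p_1$ down towards $0$, not up. So $q=\min(1,p_0/2,p_1/2)$ is in fact strictly below $1$ precisely because the exterior-cone construction forces $p_1$ to be small; there is no choice of auxiliary cones that makes this mechanism yield $q=1$, for any value of $p$.

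Second, the one-dimensional projection for $1<p\le 2$ is not sound as stated. For a $C^{1,\alpha}$ conical boundary, if $x_*\in\partial K$ with $|x_*|\asymp\sqrt n$, the deviation of $\partial K$ from the tangent hyperplane at $x_*$ within the ball $B(x_*,A\sqrt n)$ is of order $|y'|^{1+\alpha}/|x_*|^\alpha\asymp\sqrt n$ at the edge of the ball; this is comparable to $\sqrt n$ and therefore much larger than $d(x+Rx_0)=\varepsilon\sqrt n$. Consequently, $\{\tau>n\}\cap\{X\subset B(x_*,A\sqrt n)\}$ does not force $\ell(X(k))>0$ — the chain can be inside $K$ yet on the wrong side of the hyperplane by an amount of order $\sqrt n$. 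The ``approximate flatness'' you invoke only holds after renormalising by $|x_*|$, and the critical case $|x_*|\asymp\sqrt n$ destroys it.

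The paper takes an entirely different and much shorter route: for $p>1$ it compares $K$ with a larger cone $K_0\supset K$ whose exponent is exactly $1$, so that the already-proved $p=1$ case (Lemma~\ref{lemma:bound.p.tau} combined with~\eqref{eqn:u.harmonic.boundary}) can be applied to $K_0$ and transported back via $\tau_K\le\tau_{K_0}$. This avoids Lemma~\ref{lemma:bound.near.boundary} and any local linearisation of the boundary.
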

    \begin{proof}
       For $p=1$ the statement follows from Lemma~\ref{lemma:bound.p.tau} 
       and the estimate~\ref{eqn:u.harmonic.boundary}. 
       For $p>1$ we can first consider a large cone 
       with $p=1$. Then the required estimate will follow from the case $p=1$.    
   \end{proof}    

\section{Proof of Theorem~\ref{thm:harmonic.function}}
 The first part of Theorem~\ref{thm:harmonic.function} 
 was proved in Lemma~\ref{lem:harmonic}. 
 With Lemma~\ref{lemma:bound.p.tau}, the proof of asymptotic property of harmonic function $V$ in Theorem~\ref{thm:harmonic.function} can be completed following the corresponding proof for random walk in \cite{randomwalkinconesrevisited}.

\section{Proof of Theorem~\ref{thm:p.tau}.}

We will preface the proof with the following Proposition. 
\begin{proposition}\label{prop.A}
    There exists a function 
    $f(A)\to 0$, as $A\to\infty$  such that  
    for sufficiently large \(n\),
    \[
        \sup_{x\in K:|x|\le \sqrt n} 
        \frac{\e_x[V_\beta(X(n));\tau>n, |X(n)|>A\sqrt n]}{V_\beta(x)}
        \le f(A). 
    \]
\end{proposition}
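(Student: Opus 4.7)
The plan is to combine the tail bound $\mathbf{P}_x(\tau>n)\le CV_\beta(x)/n^{p/2}$ from Lemma~\ref{lemma:bound.p.tau} with Fuk--Nagaev-type moment inequalities for the martingale increments of $X$, which are available under Assumption~\ref{ass-incre2}.

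First I would use the bound $V_\beta(y)\le Cu(y+Rx_0)\le C'(|y|+R)^p$ from~\eqref{eqn:u.main.bound} to reduce the statement to controlling $\mathbf{E}_x[|X(n)|^p;\tau>n,|X(n)|>A\sqrt n]$. Then I would apply the Markov property at $m=\lfloor n/2\rfloor$ and split according to whether $|X(m)|\le A\sqrt n/2$ or $|X(m)|>A\sqrt n/2$. In the first case, the event $\{|X(n)|>A\sqrt n\}$ forces the second-half increment to satisfy $|X(n)-X(m)|>A\sqrt n/2$; since $X(\cdot)-y$ is a martingale under $\mathbf{P}_y$, a Fuk--Nagaev moment bound combined with Assumption~\ref{assumption:x} and Remark~\ref{rem:slow} should yield
\[
\mathbf{E}_y[|X(n-m)-y|^p;|X(n-m)-y|>A\sqrt n/2]\le f_0(A)\,n^{p/2},
\]
with $f_0(A)\to 0$ as $A\to\infty$ for sufficiently large $n$. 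Multiplying by the outer factor $\mathbf{P}_x(\tau>m)\le CV_\beta(x)/m^{p/2}$ then produces the desired contribution $\le Cf_0(A)V_\beta(x)$. In the second case, I would use the superharmonic property of $V_\beta$ (Lemma~\ref{lemma:supermartingale}) to bound the contribution by $\mathbf{E}_x[V_\beta(X(m));\tau>m,|X(m)|>A\sqrt n/2]$; this is the same quantity at time $m$ with rescaled level $A/\sqrt 2$, and iterating the decomposition a bounded number of times reduces the level to $O(1)$, at which point the residual can be absorbed using Doob's $L^2$ maximal inequality for the martingale $|X(k)|^2-dk$ together with the supermartingale bound on $V_\beta$.

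The main obstacle will be making the bounds uniform in $x\in K$ with $|x|\le\sqrt n$, especially for $x$ near the boundary of $K$ where $V_\beta(x)$ is small. The key mechanism is that the factor $V_\beta(x)/n^{p/2}$ from Lemma~\ref{lemma:bound.p.tau} exactly cancels the $n^{p/2}$ arising in the Fuk--Nagaev moment estimate, yielding a ratio depending only on $A$. Carefully propagating this cancellation through the bootstrap iteration, while ensuring that the summed errors still tend to zero as $A\to\infty$, is the most delicate part of the argument.
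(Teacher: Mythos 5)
There is a genuine gap, and it lies in the direction of the iteration. The paper's proof keeps track of the quantity $E_{n,k}(x)=\e_x\bigl[V_\beta(X(n_k));\tau>n_k,\,|X(n_k)-x|>\widetilde A\,c_k\sqrt{n_k}\bigr]$ along the dyadic times $n_k=n/2^k$, but with the crucial choice $c_{k+1}=\tfrac34 c_k\sqrt{n_k/n_{k+1}}=\tfrac{3\sqrt2}{4}c_k$, so that the \emph{relative} threshold $c_k$ grows geometrically as $k$ increases. That growth is what makes the Fuk--Nagaev error term at step $k$ of order $1/(\widetilde A c_k)^2$, which is summable in $k$, and it also forces the final absolute threshold $\widetilde A c_m$ (with $n_m=1$) to blow up with $n$, so the base case is trivially $o(1)$ in $A$. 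Your decomposition halves the time and halves the absolute threshold simultaneously, which drives the relative level \emph{down} by a factor $1/\sqrt2$ per step: at step $k$ the Fuk--Nagaev contribution is $\sim 2^k/A^2\cdot V_\beta(x)$, and summing these over the $K\sim 2\log_2 A$ steps needed to bring the level to $O(1)$ yields $\sim(2^K/A^2)V_\beta(x)\sim V_\beta(x)$, i.e.\ a constant rather than something tending to $0$ as $A\to\infty$.

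The terminal step also fails for the same reason. After $O(\log A)$ iterations you are left with $\e_x\bigl[V_\beta(X(n'));\tau>n',\,|X(n')|>c\sqrt{n'}\bigr]$ for $n'=n/A^{O(1)}$ and a fixed constant $c$; the supermartingale property only gives $\le V_\beta(x)$, and a Doob $L^2$ maximal inequality for $|X(k)|^2-dk$ controls the size of $X$ but produces nothing that is small relative to $V_\beta(x)$ when $x$ is near $\partial K$. The paper avoids any such residual by iterating all the way to $n_m=1$ and using the growing $c_m$ there. Two secondary remarks: (a) your first move, replacing $V_\beta(y)$ by $C(|y|+R)^p$, throws away boundary information that the paper retains by working with the increments $V_\beta(X(n_k))-V_\beta(X(n_{k+1}))$ via Lemma~\ref{lem:u-diff} and Lemma~\ref{lem:u.beta-diff}, which is what lets the factor $d(Rx_0+x)$ from Corollary~\ref{lemma:bound.p.tau.1} re-enter in the $E_{222}$ estimate; and (b) for $p\le 2$ one cannot beat Fuk--Nagaev by a pure $A^{-2}$ factor, so the careful choice of $c_k$ and the $\log$-gain in the truncation estimate are essential, not just convenient.
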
    
\begin{proof}
Note first that since \(|x|\le \sqrt{n} \), 
 \[
        \e_x[V_\beta(X(n));\tau>n, |X(n)|>A \sqrt n]
\le
        	\e_x[V_\beta(X(n));\tau>n, |X(n)-x|>\widetilde A \sqrt n],
\]
where \(\widetilde A=A-1\). 
Define the sequence \(n_k\) by putting \(n_0=n\), 
\(n_k:=\sup\{2^j: 2^j\le n_{0}\}\)
and \(n_k:=n_{k-1}/2\) for \(k\ge 2\).  
We assume that   \(k\in [0, m(n)]\), where \(m(n)\) is 
picked  \(m=m(n)\) in such a way 
that \(n_m= 1\).  
Put $c_0= 1$ and let  
$c_{k+1}=\frac{3}{4} c_k \sqrt{\frac{n_k}{n_{k+1}}}$. 
Note that for \(k\ge 1\) we have 
\(c_{k+1} = c_1 (3\sqrt{2}/4)^{k}\uparrow \infty\).

We will now construct 
 estimates 
 for 
 \[
    E_{n,k}(x):=\e_x\left[V_\beta(X(n_k));\tau>n_k, |X(n_k)-x|>\widetilde A c_k \sqrt{n_k}\right]   
 \]
 recursively and will describe now the recursion step from \(k\) to \(k+1\). 
Consider the following sequence of events, 
\begin{align*}
    C_{n,k}&:= \left\{|X(n_{k+1})-X(0)|>\frac{3}{4}\widetilde A c_k\sqrt {n_k}\right\}
       =\left\{|X(n_{k+1})-X(0)|>\widetilde A c_{k+1}\sqrt {n_{k+1}}\right\}\\ 
    D_{n,k}&:= \left\{|X(n_{k})-X(n_{k+1})|> \frac{1}{4}\widetilde Ac_k\sqrt{n_{k}}\right\}.  
\end{align*}    

We have,  
\[
    E_{n,k}(x)\le E_1+E_2,     
\]
where
    \begin{align*} 
	E_1&:= \e_x\left[V_\beta(X(n_{k}));\tau>n_{k}, C_{n,k} \right]\\
	E_2&:=\e_x\left[V_\beta(X(n_{k}));\tau>n_{k}, C_{n,k}^c,D_{n,k}\right].  
    \end{align*} 
    Using the supermartingale property of $V_\beta(X(n))\ind{\tau>n}$ we estimate the first expectation as follows, 
    \begin{align}
        \nonumber
        E_1&\le 
        \e_x\left[\e_{X(n_{k+1})}[V_\beta(X(n_{k}-n_{k+1}));\tau>n_k-n_{k+1}];\tau>n_{k+1}, 
        C_{n,k}
        \right]\\
        \label{e1}
        &\le 
        \e_x\left[V_\beta(X(n_{k+1}));\tau>n_{k+1}, 
        C_{n,k}
        \right]=
        E_{n,k+1}(x).  
    \end{align}    
  We split the second expectation as follows,    
  \begin{multline*}
    E_2\le E_{21}+E_{22}  
    :=\e_x\left[V_\beta(X(n_{k+1}));\tau>n_k,
    D_{n,k}\right]\\ 
    +
    \e_x\left[V_\beta(X(n_{k}))-V_\beta(X(n_{k+1}));\tau>n_k,C_{n,k}^c,D_{n,k}\right].
  \end{multline*}  
  Then, using the supermartingale property again, 
  \begin{align*}
    E_{21}&\le 
    \e_x\left[V_\beta(X(n_{k+1}));\tau>n_{k+1},D_{n,k}\right]\\ 
    &\le 
    \e_x\left[V_\beta(X(n_{k+1}));\tau>n_{k+1}\right] 
    \sup_{w\in K} \pr_w\left(|X(n_k-n_{k+1})-w|> \frac{1}{4}\widetilde Ac_k\sqrt{n_k}\right)\\ 
    &\le V_\beta(x) \sup_{w\in K} \pr_w\left(|X(n_k-n_{k+1})-w|> \frac{1}{4}\widetilde Ac_k\sqrt{n_k}\right). 
  \end{align*} 
Now,  using Assumption \ref{assumption:x}, Chebyshev's inequality and Lemma~\ref{lemma:fuk} with 
$z= \frac{1}{4} \widetilde Ac_k\sqrt{n_k}$ and $y= \frac{z}{\sqrt d}$, we obtain 
\begin{multline*}
    \sup_{w\in K} \pr_w\left(|X(n_k-n_{k+1})-w|
    \ge \frac{1}{4}\widetilde Ac_k\sqrt{n_k}\right) \le 
    \frac{16 ed^2}{\widetilde A^2 c_k^{2} } +  
    n_{k+1}\pr\left(Y> \frac{1}{4\sqrt d}\widetilde A c_k\sqrt{n_k}\right) \\ 
    \le  \frac{16 ed^2}{\widetilde A^2 c_k^{2} } + 
    \frac{ 16d \e\left[|Y|^2 \log{(1+|Y|)}; |Y|>c\widetilde A\sqrt{n_k}\right]}{\widetilde A^2 c_k^{2}  \log(n_{k})}
    \le  \frac{C}{\widetilde A^2 c_k^{2}}. 
\end{multline*}
Hence,
\begin{equation}\label{eq21} 
    E_{21}\le 
    \frac{CV_\beta(x)}{\widetilde A^2 c_k^{2}}.  
\end{equation}    
Next note that if follows from Lemma~\ref{diff-bound} and 
Lemma~\ref{lem:u.beta-diff} that 
\begin{multline*}
    V_\beta(X(n_{k}))-V_\beta(X(n_{k+1})) \\   
    \le 
    C\left(|X(n_{k})-X(n_{k+1})|^p 
    +|X(n_{k})-X(n_{k+1})||Rx_0+X(n_{k+1})|^{p-1}\right).  
\end{multline*}
Then, for the term $E_{22}$ we obtain, 
\begin{multline*} 
    E_{22}
    \le 
    C \e_x\left[|X(n_{k})-X(n_{k+1})|^p,\tau>n_{k},D_{n,k}\right]\\ 
    +C \e_x\left[|Rx_0+X(n_{k+1})|^{p-1}|X(n_{k})-X(n_{k+1})|,\tau>n_k, C_{n,k}^c,D_{n,k}\right]\\ 
\le 
    C\pr_x(\tau>n_{k+1})
    \sup_{w\in K}\e_w\left[|X(n_k-n_{k+1})-w|^p,|X(n_k-n_{k+1})-w|> \frac{1}{4}\widetilde Ac_k\sqrt{n_k}\right]\\ 
    +
    C 
    \left(\widetilde A c_{k+1}\sqrt{n_{k+1}}+|x|\right)^{p-1}
    \pr_x(\tau>n_{k+1})\\ 
\times \sup_{w\in K}\e_w\left[|X(n_k-n_{k+1})-w|,|X(n_{k+1})-w|> \frac{1}{4}\widetilde Ac_k\sqrt{n_k}\right]  
\le E_{221}+E_{222},
\end{multline*}    
where, making use of Lemma~\ref{lemma:bound.p.tau}, 
\begin{multline*} 
    E_{221}
    \le 
    CV_\beta(x)
    \sup_{w\in K}
    \Biggl(
	    \frac{\e_w\left[|X(n_k-n_{k+1})-w|^p,|X(n_k-n_{k+1})-w|>\frac14 \widetilde A c_{k}\sqrt{n_k}\right]}{(n_{k+1})^{p/2}}\\ 
	    +(\widetilde A c_{k+1}\sqrt{n_{k+1}} )^{p-1}
\frac{\e_w\left[|X(n_k-n_{k+1})-w|,|X(n_k-n_{k+1})-w|> \frac14 \widetilde A c_k\sqrt{n_k}\right]}{(n_{k+1})^{p/2}}
    \Biggr)
\end{multline*}   
and, then using Corollary~\ref{lemma:bound.p.tau.1},  
\begin{multline*} 
    E_{222}
    \le 
    C|x|^{p-1} 
    d(Rx_0+x)
    \sup_{w\in K}
    \left(
        \frac{\e_w\left[|X(n_k-n_{k+1})-w|,|X(n_k-n_{k+1})-w|>\frac14 \widetilde A c_k\sqrt{n_k}\right]}{\sqrt {n_{k+1}}}
    \right).
\end{multline*}
Since $p\ge 1$ we can continue as follows, 
\begin{equation*}
    E_{221}
    \le 
    CV_\beta(x)
    \frac{\sup_{w\in K} \e_w\left[|X(n_k-n_{k+1})-w|^p,|X(n_k-n_{k+1})-w|> \frac14\widetilde Ac_k\sqrt{n_k}\right]}{(n_{k+1})^{p/2}}.
\end{equation*}  
Observe now an inequality for a non-negative random variable \(Z\), 
\begin{multline}\label{expect.tail}
    \e[Z^p;Z>t]=p\int_t^{\infty}\pr(Z>z)z^{p-1}dz +t^p\pr(Z>t)
    \le p\int_{t/2}^{\infty}\pr(Z>z)z^{p-1}dz \\ 
    +\frac{1}{p(1-2^{-p})}\int_{t/2}^t\pr(Z>z) z^{p-1}dz
    \le C_p\int_{t/2}^{\infty}\pr(Z>z)z^{p-1}dz. 
\end{multline}  
Next fix \(\varepsilon\) such that 
\begin{equation}
    \label{varepsilon}
    p-\frac{2}{\sqrt d\varepsilon}<-2 
\end{equation}    
and put 
\[
f_0(A):=    
\e\left[|Y|^2 \log(1+|Y|), |Y|>\frac18\varepsilon \widetilde A \right]
+
\e[Y^p;Y>\frac18\varepsilon \widetilde A] \downarrow 0,\quad A\to \infty. 
\]
Then, using the  Fuk-Nagaev inequality stated in Lemma~\ref{lemma:fuk} 
with  the truncation level $y = \varepsilon z$,  we obtain,
\begin{multline}
\label{eqn:e22_}
\frac{E_{221}}{V_\beta(x)}
\le \frac{C_p}{(n_{k+1})^{p/2}} 
\sup_{w\in K} \int_{\frac18\widetilde Ac_k\sqrt{n_k}} 
\pr\left(
    |X(n_k-n_{k+1})-w|>z 
\right)z^{p-1}dz\\
%\frac{\sup_{y\in K} \e_y[|X(n_{k+1})-y|^p,|X(n_{k+1})-y|\ge \widetilde Ac_0^{k+1}\sqrt{n_{k+1}}]}{(n_{k+1})^{p/2}} \\ 
\le \frac{C}{(n_{k+1})^{p/2}} \Biggl(\int^\infty_{\frac18 \widetilde Ac_k\sqrt{n_k}} \left(2d e^{\frac{1}{\sqrt d \varepsilon}}\left(\frac{n_{k+1}}{\varepsilon z^2}\right)^{\frac{1}{\sqrt {d} \varepsilon}}\right) z^{p-1}dz\\ +
n_{k+1}\int^\infty_{\frac18 \widetilde Ac_k\sqrt{n_k}} \pr (Y > \varepsilon z) z^{p-1} dz\Biggr). 
\end{multline}
Integrating the first integral, since  $\frac{2}{\sqrt d\varepsilon}-p>2$, 
\begin{multline*}
	\frac{1}{n_{k+1}^{p/2}}\int^\infty_{\frac18 \widetilde Ac_k\sqrt{n_k}} 2d e^{\frac{1}{\sqrt d\varepsilon}}\left(\frac{n_{k+1}}{\varepsilon z^2}\right)^{\frac{1}{\sqrt d\varepsilon}} z^{p-1} dz \\
	\le 
	2d n_{k+1}^{\frac{1}{\sqrt{d}\varepsilon}-\frac{p}{2}}  
	\left(\frac{e}{\varepsilon}\right)^{\frac{1}{\sqrt{d}\varepsilon}}
   \left(\frac{2}{\sqrt d\varepsilon}-p\right)
   \left(\frac18 \widetilde A c_k\sqrt{n_{k+1}}\right)^{p-\frac{2}{\sqrt d\varepsilon}} 
   \le \frac{C}{(c_k\widetilde A)^2}.
\end{multline*}
We estimate the second integral separately for \(p\le 2\) and \(p>2\). 
For $p\le 2$, using  Assumption \ref{assumption:x} and  Markov's inequality,
\begin{multline*}
 % \frac{1}{n^{p/2}} \int^\infty_{\frac{1}{4}A\sqrt{n}} z^{p-1} n \pr_y (|X(1)-y| > \varepsilon z) dz \\ 
 %\le 
	\frac{1}{n_{k+1}^{p/2}} \int^\infty_{\frac18\widetilde Ac_k \sqrt{n_k}} n_{k+1} z^{p-1} \pr (|Y|>\varepsilon z) dz\\  
	\le \frac{1}{n_{k+1}^{p/2-1}} \left(\frac{1}{\frac18\widetilde A c_k\sqrt{n_k}}\right)^{2-p} 
	\int^\infty_{\frac18\widetilde A c_k \sqrt{n_k}} z \pr (|Y|>\varepsilon z) dz 
 \\
 \le \left(\frac{C}{\widetilde A c_k}\right)^{2-p}  
 \frac{f_0(A)}{\log(\frac\varepsilon8\widetilde A c_k \sqrt{n_k})} 
%\le \frac{C}{(\widetilde Ac_k)^{2-p}\ln(n_{k}c_k^{2})}.   
\end{multline*}
For $p>2$, we estimate the second integral using  Assumption \ref{assumption:x} and  Markov's inequality,
\[
 % \frac{1}{n^{p/2}} \int^\infty_{\frac{1}{4}A\sqrt{n}} z^{p-1} n \pr_y (|X(1)-y| > \varepsilon z) dz \\ 
 %\le 
	\frac{1}{(n_{k+1})^{p/2}} \int^\infty_{\frac18\widetilde A c_k \sqrt{n_k}} n_{k+1} z^{p-1} \pr (|Y|>\varepsilon z) dz \le
\frac{Cf_0(A)}{n_{k}^{p/2-1}} 
 . 
\] 
Therefore, we obtain,
\begin{equation}
    \label{e221}
    E_{221}
    \le 
    C V_\beta(x) \left(\frac{1}{\widetilde A^2 c_k^{2}} + 
    \frac{f_0(A)\ind{p\le 2}}{(\widetilde Ac_k)^{2-p}\log(\frac\varepsilon8\widetilde A c_k \sqrt{n_k})} 
    +\frac{f_0(A)\ind{p>2} }{n_{k}^{p/2-1}}
    \right).
\end{equation}    

Next we will consider \(E_{222}\). 
Similarly, using the Fuk-Nagaev inequality as earlier  we obtain  
\begin{multline*} 
\frac{\sup_{w\in K} \e_y[|X(n_k-n_{k+1})-w|,|X(n_k-n_{k+1})-w|\ge \frac14 \widetilde Ac_k\sqrt{n_k}]}{\sqrt{n_{k+1}}}\\ 
\le 
\frac{C}{\sqrt{n_{k+1}}}
\Biggl(\int^\infty_{\frac18 \widetilde Ac_k\sqrt{n_k}} \left(2d e^{\frac{1}{\sqrt d \varepsilon}}\left(\frac{n_{k+1}}{\varepsilon z^2}\right)^{\frac{1}{\sqrt {d} \varepsilon}}\right)dz\\ +
n_{k+1}\int^\infty_{\frac18 \widetilde Ac_k\sqrt{n_k}} \pr (Y > \varepsilon z)  dz\Biggr)
\end{multline*}
We can estimate the first integral as earlier, by the Markov inequality, 
with   $\varepsilon$ such that $1-\frac{2}{\sqrt d\varepsilon}<-2$, 
\begin{multline*}
	\frac{1}{\sqrt{n_{k+1}}}\int^\infty_{\frac18 \widetilde Ac_k\sqrt{n_k}} 2d e^{\frac{1}{\sqrt d\varepsilon}}\left(\frac{n_{k+1}}{\varepsilon z^2}\right)^{\frac{1}{\sqrt d\varepsilon}} dz \\
	\le 
	2d \left(\frac{e}{\varepsilon}\right)^{\frac{1}{\sqrt{d}\varepsilon}}
    (n_{k+1})^{\frac{1}{\sqrt{d}\varepsilon}-\frac{1}{2}}  
	\left(\frac{2}{\sqrt d\varepsilon}-1\right)
   \left(\frac18 \widetilde A c_k\sqrt{n_{k+1}}\right)^{1-\frac{2}{\sqrt d\varepsilon}}
   \le \frac{C}{(c_k\widetilde A)^{2}}.
\end{multline*} 
For the second integral, as earlier,  
\begin{multline*}
    \frac{1}{\sqrt{n_{k+1}}} \int^\infty_{\frac18\widetilde Ac_k \sqrt{n_k}} n_{k+1}  \pr (|Y|>\varepsilon z) dz\\  
\\
 \le \left(\frac{8}{\widetilde A c_k}\right)  
 \frac{c \e\left[|Y|^2 \log(1+|Y|), |Y|>\frac18\varepsilon \widetilde A c_k \sqrt{n_k}\right]}{\log(\widetilde Ac_k^2 n_{k})} 
 \le \frac{C}{\widetilde A c_k\log(\widetilde Ac_k^2 n_{k})}.
\end{multline*}
Hence, 
\begin{equation} 
    \label{e222}
E_{222}\le V_\beta(x)
\frac{C}{\widetilde Ac_{k} \log(\widetilde Ac_k^2 n_{k})}.
\end{equation}
Combining~\eqref{e221} and \eqref{e222} we obtain 
    \begin{multline}\label{e2}
        E_{n,k}(x) \le E_{n,k+1}(x) 
        +
        V_\beta(x)
        \Biggl(
            \frac{1}{\widetilde A^2 c_k^{2}} + 
            \frac{f_0(A)\ind{p\le 2}}{(\widetilde Ac_k)^{2-p}\log(\frac\varepsilon8\widetilde A c_k \sqrt{n_k})} \\ 
            +\frac{f_0(A)\ind{p>2} }{n_{k}^{p/2-1}}
            +\frac{C}{\widetilde Ac_{k} \log(\widetilde Ac_k^2 n_{k})}
        \Biggr). 
    \end{multline}
We can iterate~\eqref{e1} and~\eqref{e2} now $m$ times to obtain 
   \begin{multline*} 
    \e_x[V_\beta(X(n));\tau>n, |X(n)|> A\sqrt n] \\ 
    \le   
    \e_x\left[V_\beta(X(1));\tau>1, 
    |X(n_m)-x|>\widetilde A c_m \sqrt {n_m}\right]  
    +V_\beta(x)
    \frac{C}{\widetilde A}
    \\
    + 
V_\beta(x)
\sum_{k=0}^{m-1} 
\left(
    \frac{f_0(A)\ind{p\le 2}}{(\widetilde Ac_k)^{2-p}\log(\frac\varepsilon8\widetilde A c_k \sqrt{n_k})} 
            +\frac{f_0(A)\ind{p>2} }{n_{k}^{p/2-1}}
\right)
 . 
\end{multline*} 
Recall that we picked \(m=m(n)\) in such a way 
that \(n_m=1\).   
Then, clearly, 
\[
    \sum_{k=0}^{m-1} 
    \frac{f_0(A)\ind{p>2} }{n_{k}^{p/2-1}} 
    \le 
    f_0(A)\ind{p>2} 
    \sum_{k=0}^{m-1}
    \frac{1}{2^{(k)(p/2-1)}}  
    \le Cf_0(A). 
\]
Next, for \(\widetilde A>\frac{8}{\varepsilon}\), 
\begin{multline*}
    \sum_{k=0}^{m-1} 
    \frac{f_0(A)\ind{p\le 2}}{(\widetilde Ac_k)^{2-p}\log(\frac\varepsilon8\widetilde A c_k \sqrt{n_k})} 
    \le 
    f_0(A) 
    \sum_{k=0}^{m-1} 
    \frac{1}{\log(c_k \sqrt{n_k})} \\ 
    \le C f_0(A) 
    \sum_{k=0}^{m-1} 
    \frac{1}{k\log 2+ (m-k)\log (3\sqrt 2/4)}. 
\end{multline*}
We can split the sum in \(2\) parts and 
obtain, 
\[
    \sum_{k=0}^{m/2} 
    \frac{1}{k\ln 2+ (m-k)\log (3\sqrt 2/4)}
    \le \frac{m/2}{m/2\log (3\sqrt{2}/4)} 
\]
and 
\[
\sum_{k=m/2+1}^{m-1} 
    \frac{1}{k\log 2+ (m-k)\log (3\sqrt 2/4)}
    \le \frac{m/2}{m/2\log (2)}. 
\]
Hence, 
\[
    \sum_{k=0}^{m-1} 
    \frac{f_0(A)\ind{p\le 2}}{(\widetilde Ac_k)^{2-p}\log\left(\frac\varepsilon8\widetilde A c_k \sqrt{n_k}\right)} 
    \le Cf_0(A).   
\]
We are going  to deal with the first term now. 
First 
\begin{multline*} 
    \e_x\left[V_\beta(X(1));\tau>1, 
    |X(1)-x|> \widetilde A c_m\right]\\ 
\le V_\beta(x) \pr_x\left(|X(1)-x|> \widetilde A c_m\right) \\ 
+ 
\e_x\left[V_\beta(X(1))-V_\beta(x);\tau>1, 
    |X(1)-x|>\widetilde A c_m\right]. 
\end{multline*} 
For the first term we obtain immediately, 
\[
    \pr_x\left(|X(1)-x|> \widetilde A c_m\right) 
    \le \frac{\e\left[Y^p;Y>\widetilde A c_m\right]}{(\widetilde A c_m)^p}
    \le f_0(A). 
\]
For the last term 
 \begin{multline*}
    \e_x\left[V_\beta(X(1))-V_\beta(x);\tau>1, 
    |X(1)-x|>\widetilde A c_m\right]\\ 
\le
C |Rx_0+x|^{p-1}
\e_x\left[|X(1)-x|;
    |X(1)-x|>\widetilde A c_m\right]\\
+C\e_x\left[|X(1)-x|^{p};
    |X(1)-x|>\widetilde A c_m\right]
    \le CV_\beta(x)
    f_0(A).
 \end{multline*}  
The statement now follows. 
\end{proof}
\begin{proof}[Proof of Theorem~\ref{thm:p.tau}] 

Part (i) has been proved in Lemma~\ref{lemma:bound.p.tau}.     

We consider now part (iii).
Take $m=m(n)$ such that $m(n)=o(n)$, and for $\varepsilon <1$, $A>1$, then divide the cone into three parts as,
\begin{align*}
    K_1 &:=\{y \in K, d(y) \le \varepsilon \sqrt{m}, |y| \le A \sqrt{m}\} \\K_2 &:=\{y \in K, d(y) > \varepsilon \sqrt{m}, |y| \le A \sqrt{m}\} \\K_3 &:=\{y \in K,|y| > A \sqrt{m}\}.
\end{align*}
Let $D$ be a compact set in the cone.  Using Markov property at $m\colon m <n$, 
    \begin{align*}
        &\mathbf{P}_x \left(\frac{X(n)}{\sqrt{n}} \in D, \tau>n\right) \\&= \int_{K_1} \pr_x(X(m) \in d y, \tau>m) \mathbf{P}_y \left(\frac{X(n-m)}{\sqrt{n}} \in D, \tau > n-m\right) \\&+\int_{K_2} \mathbf{P}_x\left(X(m) \in d y, \tau>m\right) \mathbf{P}_y \left(\frac{X(n-m)}{\sqrt{n}} \in D, \tau > n-m\right)\\&+\int_{K_3} \mathbf{P}_x(X(m) \in d y, \tau>m) \mathbf{P}_y \left(\frac{X(n-m)}{\sqrt{n}} \in D, \tau > n-m\right).
    \end{align*}
Applying  Lemma \ref{lemma:bound.p.tau} we estimate  the integral over $K_1$ as follows,
    \begin{align*}
   & \int_{K_1} \mathbf{P}_x(X(m) \in d y, \tau>m) \mathbf{P}_y \left(\frac{X(n-m)}{\sqrt{n}} \in D, \tau > n-m\right) \\& \le \int_{K_1} \mathbf{P}_x(X(m) \in d y, \tau>m) \mathbf{P}_y (\tau>n-m) \\& \le C \int_{K_1} \mathbf{P}_x(X(m) \in d y, \tau>m) \frac{u(y+Rx_0)}{n^{\frac{p}{2}}}.
    \end{align*}
Applying equation \eqref{eqn:u.main.bound} and making use of the definition of $K_1$,
\[ 
     u(y) \le u(y+Rx_0)  \le u|y+Rx_0|^{p-1} d(y+Rx_0) \le C \varepsilon A^{p-1} m^{\frac{p}{2}}.
\] 
Therefore, we can continue as folllows, 
\begin{equation}
\label{eqn:k1}
    \begin{split}
        &\int_{K_1} \mathbf{P}_x(X(m) \in d y, \tau>m) \mathbf{P}_y \left(\frac{X(n-m)}{\sqrt{n}} \in D, \tau > n-m\right) \\& \le \frac{C}{n^{\frac{p}{2}}} \int_{K_1} \mathbf{P}_x(X(m) \in d y, \tau>m)\varepsilon A^{p-1}m^{\frac{p}{2}} \\& \le \frac{C}{n^{\frac{p}{2}}}\varepsilon A^{p-1}m^{\frac{p}{2}}\mathbf{P}_x(\tau>m) \le \frac{C}{n^{\frac{p}{2}}} \varepsilon A^{p-1} u(x+Rx_0)
    \end{split}
\end{equation}
For $K_3$ we  apply the upper bound for the tail distribution and Proposition~\ref{prop.A}. 
Then
\begin{equation}
    \begin{split}
    & \int_{K_3} \mathbf{P}_x(X(m) \in d y, \tau>m) \mathbf{P}_y \left(\frac{X(n-m)}{\sqrt{n}} \in D, \tau > n-m\right)  \\& \le \frac{C}{n^{\frac{p}{2}}} \int_{K_3} \mathbf{P}_x(X(m) \in dy, \tau>m ) V_\beta(y)\\& \le \frac{C}{n^{\frac{p}{2}}} \e_x[V_\beta(X(m)), \tau>m, |X(m)| > A \sqrt{m}] \\& \le \frac{C}{n^{\frac{p}{2}}} f(A) V_\beta(x) \le \frac{c}{n^{\frac{p}{2}}} f(A) u(x+Rx_0),
    \end{split}
    \label{eqn:bound.k3}
\end{equation}
where $f(A)\downarrow 0$ as $A\uparrow\infty$.  
As a result,  
\begin{equation}
\label{eqn:k3}
    \int_{K_3}\mathbf{P}_x(X(m)\in d y , \tau>m)u(y) \le \frac{C}{n^{\frac{p}{2}}} f(A) u(x+Rx_0). 
\end{equation}
 By taking $m(n)$ sufficiently large (but still $o(n)$), 
we can ensure using 
the functional central limit theorem for martingales 
that uniformly in $y\in K_2$,  
\begin{equation}
    \begin{split}
    \mathbf{P}_y\left(\frac{X(n-m)}{\sqrt{n}} \in D, \tau>n-m\right) & 
    \sim \mathbf{P}\left(B\left(\frac{n-m}{n}\right)\in D, \tau^{bm}_y >n-m\right) \\& \sim C\frac{u(y)}{n^{\frac{p}{2}}} \int_D \exp\left({-\frac{|z|^2}{2}}\right) u(z) d z. 
    \end{split}
\end{equation}
Then, applying  Lemma~\ref{lemma:bound.p.tau}, 
\begin{align*}
   & \int_{K_2} \mathbf{P}_x(X(m) \in d y, \tau>m) 
   \mathbf{P}_y\left(\frac{X(n-m)}{\sqrt{n}} \in D, \tau > n-m\right) \\& \sim \frac{C}{n^{\frac{p}{2}}} \int_{D} e^{-|z|^2/2} u(z) dz\int_{K_2} \mathbf{P}_x(X(m) \in d y, \tau>m) u(y).  
\end{align*}
Using the bounds  of the integrals over  $K_1$ and $K_3$,  
\begin{equation}
\label{eqn:k2}
    \begin{split}
    &\Biggl|\int_{K_2} \mathbf{P}_x\left(X(m) \in d y, \tau>m\right) \mathbf{P}_y\left(\frac{X(m)}{\sqrt{n}} \in D, \tau > n-m\right) \\& - \frac{C}{n^{\frac{p}{2}}} \int_D u(z) \exp\left({-\frac{|z|^2}{2}}\right) d z \int_D \pr_x(X(m)\in dy, \tau>m)u(y) \Biggr| \\& =\left|\int_{K_1 \bigcup K_3}\mathbf{P}_x(X(m) \in d y, \tau>m) u(y)\ \int_D u(z) \exp\left({-\frac{|z|^2}{2}}\right) d z\right| \\& \le \frac{C}{n^{\frac{p}{2}}} \left(\varepsilon A^{p-1}+ f(A)\right) u(x+Rx_0). 
    \end{split}
\end{equation}
Combining equations \eqref{eqn:k1}, \eqref{eqn:k2} and \eqref{eqn:k3}, the statement of part (iii) follows,
\begin{align*}
&\Bigg|\pr_x\left(\frac{X(n)}{\sqrt{n}}\in D,\tau_x>n\right)
-\frac{\varkappa}{n^{p/2}}\int_D u(z)e^{-|z|^2/2}dz
\e[u(X(m));\tau_x>m]\Bigg|\\
&\hspace{1cm} \le Cu(x+Rx_0)\frac{\varepsilon A^{p-1}+f(A)}{n^{p/2}}.
\end{align*}
uniformly in $|x|< \sqrt{m}$.
Letting  $\varepsilon \to 0$ and $A \to \infty$,
\begin{equation}
    \mathbf{P}_x \left( \frac{X(n)}{\sqrt{n}} \in D |\tau >n \right) \sim \frac{\varkappa V(x)}{n^\frac{p}{2}} c\int_D u(z)\exp\left\{-\frac{|z|^2}{2}\right\} d z
\end{equation}
and 
\begin{equation}
    \pr_x(\tau>n) \sim \frac{\varkappa V(x)}{n^\frac{p}{2}}.
\end{equation}

\end{proof}

\section{A version of Fuk-Nagaev inequality}
We now present an adaptation of the Fuk-Nagaev inequality~\cite{fuk1973} 
\begin{lemma}
\label{lemma:fuk}
Let $(X(n))_{n\ge 1}$ be a $d$-dimensional Markov chain, for which
Assumption~\ref{assumption:x} holds and $\e_x[X_j(1)-x]=0$ for $j=1,\ldots,d$ and all $x$. 
Then, for all $z\text{,}y >0$, 
\begin{equation}
\label{fuk1}
    \pr_x \left(|X(n)-x| >z, \max_{k \le n} |X(k)-X(k-1)|<y\right) \le 2 d e^{z/{\sqrt{d}y}}\left(\frac{n\sqrt{d}}{z y}\right)^{z/{\sqrt{d}y}}
\end{equation}
and 
\begin{equation}
\label{fuk2}
    \pr_x (|X(n)-x|>z) \le 2 d e^{z/{\sqrt{d}y}}\left(\frac{n\sqrt{d}}{z y}\right)^{z/{\sqrt{d}y}}+n\pr(Y>y).
\end{equation}
\end{lemma}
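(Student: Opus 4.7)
The plan is to follow the classical Fuk--Nagaev argument, adapted to this multidimensional martingale setting: reduce to the upper tail of a single coordinate by a union bound, truncate each increment at level $y$, and apply a Bennett-type exponential moment estimate; Assumption~\ref{ass-incre2} furnishes the unit conditional variance that plays the role of the Bennett variance proxy.

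Since $|X(n)-x|^2=\sum_{j=1}^{d}(X_j(n)-x_j)^2$, the event $\{|X(n)-x|>z\}$ forces $|X_j(n)-x_j|>z/\sqrt d$ for some $j$, and a union bound over coordinates and signs gives
\[
\pr_x(|X(n)-x|>z,E_n)\le 2\sum_{j=1}^{d}\pr_x\bigl(X_j(n)-x_j>z/\sqrt d,E_n\bigr),
\]
where $E_n:=\{\max_{k\le n}|\Delta X(k)|<y\}$. Fix $j$ and set $\widetilde\Delta_j(l):=\Delta X_j(l)\ind{|\Delta X(l)|<y}$; this is bounded by $y$, and on $E_n$ one has $\sum_l \widetilde\Delta_j(l)=X_j(n)-x_j$. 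Exponential Chebyshev plus the tower property reduces the task to bounding the product of the conditional moment generating functions $\e[e^{\lambda\widetilde\Delta_j(l)}\mid\mathcal F_{l-1}]$. Using the convexity inequality $e^{u}\le 1+u+\frac{e^{\lambda y}-1-\lambda y}{(\lambda y)^2}u^2$ for $|u|\le\lambda y$, together with Assumption~\ref{ass-incre2} (which gives $\e[(\Delta X_j(l))^2\mid\mathcal F_{l-1}]=1$), I would obtain
\[
\e\bigl[e^{\lambda\widetilde\Delta_j(l)}\mid\mathcal F_{l-1}\bigr]\le 1+\lambda\mu_l+\frac{e^{\lambda y}-1-\lambda y}{y^2}\le\exp\!\Bigl(\lambda|\mu_l|+\tfrac{e^{\lambda y}-1-\lambda y}{y^2}\Bigr),
\]
where $\mu_l=-\e[\Delta X_j(l)\ind{|\Delta X(l)|\ge y}\mid\mathcal F_{l-1}]$ is the truncation-induced mean shift. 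Iterating and optimising by Bennett's choice $\lambda y=\log(1+zy/(n\sqrt d))$, a direct computation reduces the main exponent to
\[
\frac{z}{\sqrt d\,y}-\frac{zy/\sqrt d+n}{y^{2}}\log\!\Bigl(1+\frac{zy}{n\sqrt d}\Bigr)\le\frac{z}{\sqrt d\,y}\,\log\frac{en\sqrt d}{zy},
\]
which is exactly $\log\!\bigl(e^{z/(\sqrt d y)}(n\sqrt d/(zy))^{z/(\sqrt d y)}\bigr)$; multiplying by the factor $2d$ from the reduction yields~\eqref{fuk1}. Inequality~\eqref{fuk2} then follows at once by splitting $\{|X(n)-x|>z\}$ on $E_n$ versus $E_n^c$ and using the union bound $\pr_x(E_n^c)\le\sum_l\pr_x(|\Delta X(l)|\ge y)\le n\pr(Y\ge y)$ coming from stochastic majorisation (Assumption~\ref{assumption:x}).

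The main obstacle is controlling the truncation-induced mean shift $\mu_l$, since $\widetilde\Delta_j(l)$ is no longer centred even though $\Delta X_j(l)$ is. Using $|\Delta X_j|\le|\Delta X|$ and $|\Delta X(l)|\ind{|\Delta X(l)|\ge y}\le y^{-1}|\Delta X(l)|^{2}\ind{|\Delta X(l)|\ge y}$ followed by Assumption~\ref{assumption:x}, one obtains $|\mu_l|\le y^{-1}\e[Y^{2}\ind{Y\ge y}]$, which vanishes as $y\to\infty$ by~\eqref{eq:Y}. Consequently the extra contribution $\lambda\sum_l|\mu_l|\le n\lambda y^{-1}\e[Y^{2}\ind{Y\ge y}]$ to the Bennett exponent is of smaller order than the main term $(z/y)\log(zy/n)$ in the nontrivial regime $zy\ge n\sqrt d$ and can be absorbed without altering the stated constants; in the complementary regime $zy<n\sqrt d$ the right-hand side of~\eqref{fuk1} already exceeds~$1$, so the inequality holds trivially. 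This is the one place where Assumptions~\ref{assumption:x} and~\ref{ass-incre2} interact nontrivially.
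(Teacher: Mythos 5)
You and the paper share the same skeleton — union bound over the $d$ coordinates, one-dimensional Fuk--Nagaev bound for each, and then the second inequality by adding $n\pr(Y>y)$ for the probability that some increment is untruncated — but the middle step is treated very differently. The paper treats the one-dimensional martingale bound as a black box: it checks that Assumption~\ref{assumption:x} gives the conditional second-moment bound $\e[(\Delta X_j(l))^2\mid\mathcal F_{l-1}]\le\e[Y^2]=:b_i^2$ (hypothesis (20) of Fuk's 1973 paper), applies Fuk's Corollary~1 directly, and rearranges. You instead rederive the Fuk--Nagaev estimate from scratch: truncate, apply exponential Chebyshev with a Bennett-type convexity bound on the conditional moment generating function, and optimise the Bennett parameter. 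Your exponent computation $a-(a+b)\log(1+a/b)\le a(1-\log(a/b))$ (with $a=z/(\sqrt d y)$, $b=n/y^2$) is correct. So the route is genuinely different and more self-contained, which is a plus; but it imports difficulties that the citation avoids.

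Two of those difficulties should be flagged. First, you invoke Assumption~\ref{ass-incre2} to get $\e[(\Delta X_j)^2\mid\mathcal F_{l-1}]=1$, but the lemma only assumes Assumption~\ref{assumption:x} plus centring, not~\ref{ass-incre2}; the hypotheses only yield $\e[(\Delta X_j)^2\mid\mathcal F_{l-1}]\le\e[Y^2]$, and with that bound the parenthesis in~\eqref{fuk1} acquires a factor $\e[Y^2]$ (as indeed appears in the paper's own intermediate line before being dropped in the final display). You should either state that you are using~\ref{assumption:x} with $\e[Y^2]$ and track that constant, or note that the stated form of~\eqref{fuk1} implicitly normalises $\e[Y^2]\le1$. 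Second, and more substantively, the control of the truncation-induced mean shift $\mu_l$ is where your write-up is soft: you observe $|\mu_l|\le y^{-1}\e[Y^2;Y\ge y]$ and assert that $\lambda\sum_l|\mu_l|$ can be "absorbed without altering the stated constants." With the Bennett choice $\lambda y=\log(1+a/b)$ the extra contribution is $b\log(1+a/b)\,\e[Y^2;Y\ge y]$, and checking that $a-(a+b-b\,\e[Y^2;Y\ge y])\log(1+a/b)\le a(1-\log(a/b))$ is only automatic when $\e[Y^2;Y\ge y]\le1$; for small $y$ this can fail and you would need either to degrade the constant or to fold the un-truncated case into the additive error term (which is precisely how Fuk's Corollary~1 is organised and why the paper cites it). Your observation that the claim is trivial when $zy\le en\sqrt d$ is correct and helpful, but it does not by itself close the remaining regime. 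So the proposal is the right method with the correct exponent calculation, but the mean-shift step as written is a gap that needs either the $\e[Y^2;Y\ge y]\le1$ hypothesis made explicit, or the Fuk-style reorganisation the paper relies on.
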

\begin{proof}
By bounding with the sum of probabilities for one dimensional projections, we have,
\begin{equation*}
\begin{split}
    &\pr_x \left(|X(n)-x| >z, \max_{k \le n} |X(k)-X(k-1)|<y\right) \\& \le \sum^d_{j=1} \pr_x \left(|X_j(n)-x_j| > \frac{z}{\sqrt{d}}, \max_{k \le n} |X_j(k)-X_j(k-1)|<y\right) 
    \end{split}
\end{equation*}
Assumption \ref{assumption:x} implies that the equation (20) in \cite{fuk1973} holds,
\begin{equation*}
    \e[|X_j(n)-X_j(n-1)|^2\mid \mathcal{F}_{n-1}]
    =\e_{X(n-1)}[X_j(1)-X_j(0)]
    \le \e [|Y|^2] =: b_i^2.
\end{equation*}
Then we can apply Corollary 1 from \cite{fuk1973} 
with $B_n^2:=\sum_{i=1}^n b_i^2$,
\begin{equation}
\label{eqn:fuk2}
\begin{split}
    &\pr_x \left(|X_j(n)-x_j| >\frac{z}{\sqrt{d}}\right) \\&\le \sum^n_{i=1}\pr_x\left (|X_j(i)-X_j(i-1)| \ge y\right)+2 \exp \left\{\frac{z}{\sqrt{d} y}-\left(\frac{z}{\sqrt{d} y}+\frac{B_n^2}{y^2}\right) \log{\left(\frac{z y}{\sqrt{d} B^2_n}+1\right)}\right\}.
    \end{split}
\end{equation}
After rearrangement of terms,
\begin{equation}
  \begin{split}
 &\pr_x \left(|X(n)-x| > \frac{z}{\sqrt{d}}, \max_{k \le n} |X(k)-X(k-1)|<y\right) \\& \le 2 d \exp \left\{\frac{z}{\sqrt{d} y}-\left(\frac{z}{\sqrt{d} y}+\frac{B_n^2}{y^2}\right) \log{\left(\frac{z y}{\sqrt{d} B^2_n}+1\right)}\right\} \\& \le 2d e^{\frac{z}{\sqrt{d} y}} \left(\frac{\sqrt{d} n \e [|Y|^2}{ z y}\right)^{\frac{z}{\sqrt{d} y}}
  \end{split}
\end{equation}
Then we get the first inequality, and the equation \eqref{eqn:fuk2} also gives the second inequality.
\end{proof}
Consequently, Theorem \ref{thm:p.tau} can be proved in the similar way of random walk case \cite{randomwalkinconesrevisited} with Lemma \ref{lemma:fuk}.

%We will also need the following version of 
% the Nagaev-Fuk inequality. 
%\begin{proposition}
%    Let $(S(n))_{n\ge 1}$ be a $d$-dimensional random walk, 
%    for which
%    Assumption~\ref{assumption:x} holds and $\e_x[X_j(1)-x]=0$ for $j=1,\ldots,d$ and all $x$. 
%    Assume that random variable $Y$ from the Assumption~\ref{assumption:x} 
%    satisfies 
%    \[
%      \pr(Y>t)\le \phi(t)  
%    \]
%    for some regularly varying function $\phi$. 
%    Then, 
%\end{proposition}    
%\begin{proof}
% Take 
%\end{proof}    

    \end{document}